\documentclass[a4paper, 12pt]{article}
\usepackage{}

\usepackage{mathrsfs}
\usepackage{epsfig}
\usepackage{amsmath}
\usepackage{amssymb}
\usepackage{latexsym}
\usepackage{amsfonts}
\usepackage{amsthm}

\usepackage{bbm}

\usepackage[numbers,sort&compress]{natbib}
\usepackage{graphicx}
\ifx\pdfoutput\undefined \DeclareGraphicsRule{*}{eps}{*}{} \else
\DeclareGraphicsRule{*}{mps}{*}{} \fi

\usepackage[centerlast]{caption2}

\usepackage{color}

\linespread{1.4}

\usepackage{txfonts}
\usepackage{amssymb}
\usepackage{mathrsfs}
\usepackage{amsfonts}

\newtheorem{theorem}{Theorem}[section]
\newtheorem{lemma}[theorem]{Lemma}

\newtheorem{prop}[theorem]{Proposition}

\newtheorem{remark}[theorem]{Remark}
\newtheorem{definition}[theorem]{Definition}
\newtheorem{exam}[theorem]{Example}
\newtheorem{problem}[theorem]{Problem}
\newtheorem{ques}[theorem]{Question}

\textwidth=6.25in \textheight=9.0in \topmargin=-10pt
\evensidemargin=10pt \oddsidemargin=10pt \headsep=25pt
\parskip=10pt
  
\usepackage{latexsym,amssymb}
\usepackage{amsmath}

\begin{document}

\title{{Enumeration of idempotent-sum subsequences in finite cyclic semigroups and smooth sequences}}
\author{
Guoqing Wang\footnote{Corresponding Email: gqwang1979@aliyun.com}    \ \ \ \ \ \ \ Yang Zhao \ \ \ \ \ \ \ Xingliang Yi \\
\small{School of
Mathematical Sciences, Tiangong University, Tianjin, 300387, P. R. China}\\
}
\date{}
\maketitle

\begin{abstract} The enumeration of zero-sum subsequences of a given sequence over finite cyclic groups is one classical topic, which starts from one question of P. Erd\H{o}s. In this paper, we consider this problem in a more general setting---finite cyclic semigroups. Let $\mathcal{S}$ be a finite cyclic semigroup.
By $\textbf{e}$ we denote the unique idempotent of the semigroup $\mathcal{S}$. Let $T$ be a sequence over the semigroup $\mathcal{S}$, and let
$N(T; \textbf{e})$ be the number of distinct subsequences of $T$ with sum being the idempotent $\textbf{e}$. We obtain the lower bound for $N(T; \textbf{e})$ in terms of the length of $T$, and moreover, prove that $T$ contains subsequences with some smooth-structure in case that $N(T; \textbf{e})$ is not large. Our result generalizes the theorem obtained by W. Gao [Discrete Math., 1994] on the enumeration of zero-sum subsequences over finite cyclic groups to the setting of semigroups.
\end{abstract}

\noindent{\small {\bf Key Words}: {\sl  Idempotent-sum sequences; Enumeration of zero-sum subsequences; Smooth sequences; Signed smooth sequences;  Erd\H{o}s-Burgess constant; Davenport constant; Zero-sum;  Cyclic semigroups}}

\section {Introduction}

Let $G$ be a finite abelian group written additively. Let $T=(a_1,\ldots,a_{\ell})$ be a sequence over $G$.
By $N(T; 0_G)$ (just $N(T)$ for short) we denote the number of subsequences of $T$ with sum being the zero $0_G$ of $G$, i.e., the number of solutions $(\epsilon_1, \epsilon_2,\ldots,\epsilon_{\ell})$ of the form $\epsilon_i\in \{0,1\}$ for the equation $$\epsilon_1 a_1+\epsilon_2 a_2+\cdots+ \epsilon_{\ell} a_{\ell}=0_G.$$
The enumeration of subsequences with some prescribed properties, problems of this flavour as $N(T; 0_G)$,  is
a classical topic in Combinatorial Number Theory and starting from the following question asked by P. Erd\H{o}s  (personal letter from R.L. Graham as reported in \cite{OlsonEnumberate}):

{\sl ``If $T=(a_l, a_2,\ldots,a_p)$ is a sequence over the cyclic group $\mathbb{Z}/p\mathbb{Z}$ ($p$ is prime) such that $a_i$ are not all equal and $a_i\neq 0$ ($i\in \{1,2,\ldots,p\}$),
is it true that $N(T)\geq p$?''}

The Erd\H{o}s' question was completely answered by J.E. Olson who proved the following:

\noindent \textbf{Theorem A.} \cite{OlsonEnumberate} \ {\sl Let $n>0$ be an integer, and let $T=(a_1, a_2,\ldots,a_n)$ be a sequence over the cyclic group $\mathbb{Z}/n\mathbb{Z}$ such that $a_i$ are not all equal and $a_i\neq 0$ ($i\in \{1,2,\ldots,n\}$). Then $N(T)\geq n$.}

In the meantime, variety of results on the enumeration of subsequences over finite abelian groups were obtained (see e.g., \cite{CaoSun,uredi,Gaonumbertheory,ChangQuWang,GaoDisenumber,Gaogivenenumber,GaoGeroldingernumber1,GGXia,GPeng,Grynkiewnumber,Kisin}). Among which, W. Gao \cite{GaoDisenumber} obtained the following theorem  which characterized all sequences $T$ over the cyclic group $\mathbb{Z}/n\mathbb{Z}$ of order $n$ such that $N(T)$ is not large.

\noindent \textbf{Theorem B.} \cite{GaoDisenumber} \ {\sl Let $n>0$, and let $T$ be a sequence over $\mathbb{Z}/n\mathbb{Z}$. For any integer $\delta$ such that $1\leq \delta\leq n/4+1$,
if $N(T)<2^{|T|-n+1+\delta}$ then there exists some generator $g$ of $\mathbb{Z}/n\mathbb{Z}$ such that by reordering the terms  $$T=(\underbrace{g,\ldots,g}\limits_{u}, \ \ \underbrace{-g,\ldots,-g}\limits_{v},  \ \ (x_1 g), \ldots,(x_{\delta-1}g), \ \ (y_1g),\ldots,(y_{w}g))$$
with the following:

(i) $u\geq v\geq 0$ and $u+v=n-2\delta+1$;

(ii) $w=|T|-n+\delta$;

(iii) $x_i$ and $y_j$ are integers with $-\frac{n}{2}<x_i,y_j\leq \frac{n}{2}$ and $x_i\neq 0$  for every $i\in \{1,\ldots,\delta-1\}$ and $j\in \{1,\ldots,w\}$;

(iv) $\sum\limits_{i=1}^{\delta-1} |x_i|\leq 2\delta-2$, where $|x_i|$ is the absolute value of $x_i$.}

By using Theorem B, Gao \cite{GaoDisenumber} characterized all sequences $T$ of $n$ nonzero terms from $\mathbb{Z}/n\mathbb{Z}$ such that the equality $N(T)=n$ holds, and furthermore, substantially improved the result of Theorem A.
One thing worth remarking is that $N(T)\geq 2^{|T|-n+1}$ (see \cite{Olson2}) for any sequence $T$ over $\mathbb{Z}/n\mathbb{Z}$ (without any assumption on the terms of the sequence $T$), and the bound $2^{|T|-n+1}$ is optimal.
Essentially, Theorem B asserts that if $N(T)$ is not large then $T$ yields a
well-structured subsequence, which is called a (signed) smooth sequence (see Definition \ref{Definition behaving sequence}).
The smooth sequence structure plays an important role  for the investigation of inverse zero-sum problems over finite cyclic groups (see  [\cite{GRuzsa}, Theorem 5.1.8], \cite{Yuan,ZengYuan}) and finite cyclic semigroups (see \cite{Wangstructureincyclic}).

In this paper, we shall generalize Theorem B into the setting of finite cyclic semigroups using the smooth sequence structure. The main result will be presented as Theorem \ref{Theorem Number of idempotent-sum subsequences} in Section 3.

\section{Notation and terminology}

For integers $a,b\in \mathbb{Z}$, we set $[a,b]=\{x\in \mathbb{Z}: a\leq x\leq b\}$.
For a real number $x$, we denote by $\lfloor x\rfloor$ the largest integer that is less
than or equal to $x$, and by $\lceil x\rceil$ the smallest integer that is greater than or equal to $x$.

Let $\mathcal{S}$ be a commutative semigroup written additively, where the operation is denoted as $+$.
For any positive integer $m$ and any element $a\in \mathcal{S}$, we denote by $ma$ the sum $\underbrace{a+\cdots+a}\limits_{m}$. An element $e$ of $\mathcal{S}$ is said to be idempotent if $e+ e=e$. Let $E(\mathcal{S})$ be the set consisting of all idempotents of $\mathcal{S}$.
A cyclic semigroup is a semigroup generated by a single element $s$, denoted $\langle s\rangle$, consisting all elements which can be represented as $m s$ for some positive integer $m$.
If the cyclic semigroup $\langle s\rangle$ is infinite then $\langle s\rangle$ is isomorphic to the semigroup of $\mathbb{N}$ with addition (see \cite{Grilletmonograph}, Proposition 5.8), and if $\langle s\rangle$ is finite
then the least integer $k>0$ such that $ks=ts$ for some positive integer $t\neq k$ is called the {\bf index} of $\langle s\rangle$,  then the least integer $n>0$ such that $(k+n)s=k s$ is called the {\bf period} of $\langle s\rangle$. We denote a finite cyclic semigroup of index $k$ and period $n$ by $C_{k; n}$. In particular, if $k=1$ the semigroup $C_{k; n}$ reduces to a cyclic group of order $n$.
For any element $a$ of $C_{k; n}=\langle s\rangle$, let ${\rm Ind}_{s}(a)$ (write as ${\rm Ind}(a)$ for simplicity) be the least positive integer $t$ such that $t s=a$. Let $\mathbb{Z}\diagup n \mathbb{Z}$ be the additive group of integers modulo $n$.
Define a map
\begin{equation}\label{equation map psi}
\psi:C_{k; n}\rightarrow \mathbb{Z}\diagup n \mathbb{Z} \ \ \ \mbox{ given by } \ \
\psi: a\mapsto {\rm Ind}(a)+ n\mathbb{Z} \ \ \ \ \mbox{ for all } a\in C_{k; n}.
\end{equation}

For technical convenience, we  need to introduce notation and terminologies on sequences over semigroups and follow the notation of A. Geroldinger, D.J. Grynkiewicz and
others used for sequences over groups (cf. [\cite{Grynkiewiczmono}, Chapter 10] or [\cite{GH}, Chapter 5]). Let ${\cal F}(\mathcal{S})$
be the free commutative monoid, multiplicatively written, with basis
$\mathcal{S}$. We denote multiplication in $\mathcal{F}(\mathcal{S})$ by the boldsymbol $\cdot$ and we use brackets for all exponentiation in $\mathcal{F}(\mathcal{S})$. By $T\in {\cal F}(\mathcal{S})$, we mean $T$ is a sequence of terms from $\mathcal{S}$ which is
unordered, repetition of terms allowed. Say
$T=a_1a_2\cdot\ldots\cdot a_{\ell}$ where $a_i\in \mathcal{S}$ for $i\in [1,\ell]$.
The sequence $T$ can be also denoted as $T=\mathop{\bullet}\limits_{a\in \mathcal{S}}a^{[{\rm v}_a(T)]},$ where ${\rm v}_a(T)$ is a nonnegative integer and
means that the element $a$ occurs ${\rm v}_a(T)$ times in the sequence $T$. By $|T|$ we denote the length of the sequence, i.e., $|T|=\sum\limits_{a\in \mathcal{S}}{\rm v}_a(T)=\ell.$ By $\varepsilon$ we denote the
{\sl empty sequence} over $\mathcal{S}$ with $|\varepsilon|=0$. We call $T'$
a subsequence of $T$ if ${\rm v}_a(T')\leq {\rm v}_a(T)\ \ \mbox{for each element}\ \ a\in \mathcal{S},$ denoted by $T'\mid T,$ moreover, we write $T^{''}=T\cdot  T'^{[-1]}$ to mean the unique subsequence of $T$ with $T'\cdot T^{''}=T$.  We call $T'$ a {\sl proper} subsequence of $T$ provided that $T'\mid T$ and $T'\neq T$. In particular, the empty sequence  $\varepsilon$ is a proper subsequence of every nonempty sequence. Let $I$ and $J$ be two subsets of $[1,\ell]$, and let
$T_1=\mathop{\bullet}\limits_{i\in I} a_i$ and $T_2=\mathop{\bullet}\limits_{j\in J} a_j$ be two subsequences of $T$. Then we define the intersection of $T_1$ and $T_2$, denoted $T_1\cap T_2$, to be a subsequence of $T$, given as $$T_1\cap T_2=\mathop{\bullet}\limits_{k\in I\cap J} a_k.$$
If $I\neq J$, we say $T_1$ and $T_2$ are {\bf distinct subsequences} of $T$.
Let $\sigma(T)=a_1+\cdots+ a_{\ell}$ be the sum of all terms from $T$.
Let $\Sigma(T)$ be the set consisting of all the elements of $\mathcal{S}$ which can be represented as a sum of one or more terms from $T$, i.e., $\Sigma(T)=\{\sigma(T'): T' \mbox{ is taken over all nonempty subsequences of }T\}$.
We call $T$ a {\bf zero-sum} sequence provided that $\mathcal{S}$ has an identity $0_{\mathcal{S}}$ and $\sigma(T)=0_{\mathcal{S}}$.
In particular,
if $\mathcal{S}$ has an identity $0_{\mathcal{S}}$,  we adopt the convention
that $\sigma(\varepsilon)=0_\mathcal{S}.$
We say  the sequence $T\in \mathcal{F}(\mathcal{S})$ is
\begin{itemize}
     \item a {\bf zero-sum free sequence} if $T$ contains no nonempty zero-sum subsequence;
     \item an {\bf idempotent-sum sequence} if $\sigma(T)$ is an idempotent;
     \item an {\bf idempotent-sum free sequence} if $T$ contains no nonempty idempotent-sum subsequence.
\end{itemize}
It is worth remarking that when the commutative semigroup $\mathcal{S}$ is an abelian group, the notion of {\sl zero-sum free sequence} and {\sl idempotent-sum free sequence} make no difference.

For any nonempty subset $X\subset \mathcal{S}$,
by $N(T; X)$ we denote the number of subsequences of $T$ with sum belonging to the set $X$,
i.e.,
\begin{equation}\label{equationgenereralcounting}
N(T; X)=\sharp\{T': T'\mid T \mbox{ with }\sigma(T')\in X\}.
\end{equation}
In particular, if $X=\{0_{\mathcal{S}}\}$ we shall write $N(T; X)$ as $N(T)$ for short.

For a finite cyclic semigroup $C_{k; n}$, we extend $\psi$ in \eqref{equation map psi} to the map
\begin{equation}\label{equation big pasi}
\Psi:\mathcal{F}(C_{k; n})\rightarrow \mathcal{F}(\mathbb{Z}\diagup n \mathbb{Z}) \ \ \ \mbox{ given by } \ \  \Psi: T\mapsto \mathop{\bullet}\limits_{a\mid T} \psi(a)\ \ \ \ \mbox{ for any sequence }T\in \mathcal{F}(C_{k; n}).
\end{equation}

Now we give the definition of smooth sequences which is crucial to  present the main theorem.

\begin{definition}\label{Definition behaving sequence}  \
Let $G$ be an abelian group, and let $T=a_1\cdot \ldots \cdot a_{\ell}\in \mathcal{F}(G)$ with $\ell=|T|\in \mathbb{N}$.

1. (\cite{GRuzsa}, Definition 5.1.3) \ If $T=(n_1 g)\cdot \ldots\cdot (n_{\ell} g)$, where $g\in G$, $1=n_1\leq \cdots \leq n_{\ell}$, $n=n_1+\cdots +n_{\ell}<{\rm ord}(g)$ and $\sum(T)=\{g,2g,\ldots,ng\}$, we call $T$ a smooth sequence, precisely, a $g$-smooth sequence in this case;

2. If there exist $\epsilon_1,\ldots,\epsilon_{\ell}\in \{1,-1\}$ such that $\mathop{\bullet}\limits_{i\in [1,\ell]} (\epsilon_i \ a_i)\in \mathcal{F}(G)$ is smooth ($g$-smooth), we call $T$ a signed smooth ($g$-smooth) sequence.
\end{definition}

By the very definition, we see that a $g$-smooth sequence is always a signed $g$-smooth sequence, but the converse is not necessarily true (for example, the sequence $T=g\cdot (-g)$ is a signed smooth but not a smooth sequence over the cyclic group $\langle g\rangle$ of order $n>3$).  We also note that for any nonempty sequence $T\in \mathcal{F}(\mathbb{Z})$, $T$ is $1$-smooth if and only if $\sum(T)=[1,\sum\limits_{a\mid T} a]$.

By definition, it is easy to observe the following:

\noindent \textbf{Observation A.} \ {\sl Let $T=(n_1 g)\cdot \ldots\cdot (n_{\ell} g)$ be a sequence over an abelian group $G$, where $g\in G$ and $0<n_1\leq \cdots \leq n_{\ell}\leq {\rm ord}(g)$. Then $T$ is $g$-smooth if and only if the following three conditions holds:

(i) $n_1=1$;

(ii) $n_1+\cdots +n_{\ell}<{\rm ord}(g)$;

(iii) $n_t\leq 1+\sum\limits_{i=1}^{t-1} n_i$ for each $t\in [2,\ell]$.}

\section{The main result}

\begin{lemma}\label{Lemma cyclic semigroup} (\cite{Grilletmonograph},  Chapter I) \ Let $\mathcal{S}=C_{k; n}$ be a finite cyclic semigroup generated by the element $s$. Then  $\mathcal{S}=\{s,\ldots,k s,(k+1)s,\ldots,(k+n-1)s\}$
with
$$\begin{array}{llll} & is+js=\left \{\begin{array}{llll}
               (i+j)s, & \mbox{ if } \  i+j \leq  k+n-1;\\
                ts, &  \mbox{ if }  \ i+j \geq k+n, \ \mbox{ where}  \  k\leq t\leq k+n-1 \ \mbox{ and } \ t\equiv i+j\pmod{n}. \\
              \end{array}
           \right. \\
\end{array}$$
Moreover, there exists a unique idempotent, say $\ell s$, in the cyclic semigroup $\langle s\rangle$, where $$\ell\in [k,k+n-1] \  \mbox{ and }\  \ell\equiv 0\pmod {n}.$$
\end{lemma}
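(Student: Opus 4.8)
The plan is to work directly from the defining minimality properties of the index $k$ and the period $n$. First I would record the consequence of the minimality of $k$: if $is=js$ with $1\le i<j$ then $i\ge k$; in particular $s,2s,\ldots,(k-1)s$ are pairwise distinct and differ from every other power $js$ with $j\neq i$. Next, from the definition of the period I would note $ks=(k+n)s$, and by adding suitable copies of $s$ to this identity deduce $is=(i-n)s$ for every $i\ge k+n$; iterating, every element of $\langle s\rangle=\{is:i\ge 1\}$ lies in $\{s,2s,\ldots,(k+n-1)s\}$, so $\mathcal{S}$ has at most $k+n-1$ elements, and moreover $K:=\{is:i\ge k\}=\{ks,(k+1)s,\ldots,(k+n-1)s\}$.

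The heart of the argument is to show that the $n$ elements $ks,(k+1)s,\ldots,(k+n-1)s$ are pairwise distinct, and this is where the absence of cancellation in a semigroup makes things delicate. To handle it I would consider the translation map $\phi\colon x\mapsto x+s$ on $K$. One checks that $\phi$ maps $K$ into $K$ and is surjective onto $K$ (surjectivity because $is=(i-1)s+s$ for $i\ge k+1$, and $ks=(k+n)s=(k+n-1)s+s$), hence $\phi$ is a bijection of the finite set $K$, i.e.\ a permutation. Since $(k+j)s=\phi^{j}(ks)$ for all $j\ge 0$, the $\phi$-orbit of $ks$ is all of $K$, so $\phi$ is a single cycle whose length equals $|K|$; but that length is exactly the least positive $m$ with $\phi^{m}(ks)=ks$, i.e.\ with $(k+m)s=ks$, which is $n$. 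Therefore $|K|=n$, the listed elements of $K$ are distinct, and combining with the first paragraph we get $\mathcal{S}=\{s,2s,\ldots,(k+n-1)s\}$ with all $k+n-1$ terms distinct.

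The addition rule then follows immediately: $is+js=(i+j)s$ by commutativity and associativity, and if $i+j\ge k+n$ one repeatedly applies $ms=(m-n)s$ (valid for $m\ge k+n$, and never producing a value below $k$) to reach the unique $t\in[k,k+n-1]$ with $t\equiv i+j\pmod n$. Finally, for the idempotent: any element is $e=\ell s$ with $\ell\in[1,k+n-1]$, and $e+e=e$ reads $(2\ell)s=\ell s$; minimality of the index forces $\ell\ge k$, while $(2\ell)s=\ell s+\ell s=\phi^{\ell}(\ell s)$ together with the fact that $\phi$ is an $n$-cycle on $K\ni\ell s$ forces $n\mid\ell$. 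There is exactly one $\ell\in[k,k+n-1]$ with $n\mid\ell$, and conversely a direct check (using $2\ell\ge k+n$ and the addition rule, since $\ell\ge\max(k,n)$) shows this $\ell s$ is idempotent, giving both existence and uniqueness. The only genuinely non-routine step is the permutation argument of the second paragraph, which replaces the cancellation that is unavailable in a general semigroup; everything else is bookkeeping with the two minimality definitions.
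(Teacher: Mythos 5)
Your proof is correct. Note that the paper itself offers no proof of this lemma---it is quoted from Grillet's monograph---so there is no in-paper argument to compare against; your treatment is the standard one, and the translation-by-$s$ permutation argument (showing that $K=\{ks,\ldots,(k+n-1)s\}$ has exactly $n$ elements because $x\mapsto x+s$ is a surjection, hence a single $n$-cycle, on the finite set $K$) correctly supplies the substitute for the cancellation that is unavailable in a general semigroup. The idempotent analysis is likewise complete: $\ell\ge k$ from minimality of the index, $n\mid\ell$ from the $n$-cycle acting on $K$, uniqueness of such $\ell$ in $[k,k+n-1]$, and the converse verification via the addition rule using $2\ell\ge k+n$.
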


\noindent $\bullet$ In what follows, we shall denote $\textbf{e}$ as the unique idempotent of the cyclic semigroup $C_{k; n}$, and write $N(T; \textbf{e})$ for $N(T; \{\textbf{e}\})$ (given as \eqref{equationgenereralcounting}) for any sequence $T\in \mathcal{F}(C_{k; n})$.

By Lemma \ref{Lemma cyclic semigroup}, it is easy to derive the following.

\begin{lemma}\label{Lemma product condition containing idmepotent} \  Let $k,n>0$, and let $W\in \mathcal{F}(C_{k; n})$ be a nonempty sequence. Then $W$ is an idempotent-sum sequence if, and only if,
$\sum\limits_{a\mid W}{\rm Ind}(a)\geq \left\lceil\frac{k}{n}\right\rceil n$ and $\sum\limits_{a\mid W}{\rm Ind}(a)\equiv 0\pmod{n}$.
\end{lemma}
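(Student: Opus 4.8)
The plan is to translate the condition ``$W$ is idempotent-sum'' into an arithmetic statement about $\sigma(W)$ via the explicit multiplication table in Lemma \ref{Lemma cyclic semigroup}. First I would write $m=\sum_{a\mid W}{\rm Ind}(a)$, so that by the addition rule in Lemma \ref{Lemma cyclic semigroup} (applied repeatedly to sum up the terms of $W$) we have $\sigma(W)=ms$ if $m\leq k+n-1$, and otherwise $\sigma(W)=ts$ where $t$ is the unique element of $[k,k+n-1]$ with $t\equiv m\pmod n$. In either case $\sigma(W)$ is the element $t's$ where $t'$ is the unique integer in $[1,k+n-1]$ with $t'\equiv m\pmod{n}$ if $m \le k+n-1$ wait — more cleanly: $\sigma(W) = t' s$ where $t'$ is obtained from $m$ by subtracting multiples of $n$ until it lands in $[k, k+n-1]$ if $m \ge k$, and $t' = m$ if $m < k$. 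The key point is simply that $\mathrm{Ind}(\sigma(W))$ equals $m$ reduced modulo $n$ into the appropriate window.

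Next I would invoke the last sentence of Lemma \ref{Lemma cyclic semigroup}: the unique idempotent is $\textbf{e}=\ell s$ with $\ell\in[k,k+n-1]$ and $\ell\equiv 0\pmod n$. Note that such an $\ell$ exists and is unique, and it equals $\lceil k/n\rceil n$: it is the smallest multiple of $n$ that is $\geq k$, and one checks $\lceil k/n\rceil n \in [k,k+n-1]$. So $\textbf{e}=\lceil k/n\rceil n\, s$, and $\mathrm{Ind}(\textbf{e})=\lceil k/n\rceil n$.

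For the forward direction, suppose $W$ is idempotent-sum, i.e.\ $\sigma(W)=\textbf{e}$. Since $\sigma(W)=t's$ with $t'\in[1,k+n-1]$ and $t's=\textbf{e}=\ell s$, and since $s,2s,\ldots,(k+n-1)s$ are pairwise distinct (this is part of Lemma \ref{Lemma cyclic semigroup}), we get $t'=\ell$. Hence $t'\equiv 0\pmod n$, which forces $m\equiv 0\pmod n$ (as $t'\equiv m\pmod n$ in all cases), and $t'=\ell\geq k$, which (since $t'$ is $m$ reduced into the window, or $m$ itself when $m<k$) forces $m\geq k$; combined with $n\mid m$ this gives $m\geq\lceil k/n\rceil n$. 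Conversely, if $n\mid m$ and $m\geq\lceil k/n\rceil n$, then reducing $m$ modulo $n$ into $[k,k+n-1]$ yields exactly $\lceil k/n\rceil n=\ell$ (it is the unique multiple of $n$ in that window, and $m$ is a multiple of $n$ that is $\ge k$, so $m \ge \ell$ and $m \equiv \ell \pmod n$), so $\sigma(W)=\ell s=\textbf{e}$, i.e.\ $W$ is idempotent-sum.

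The only mildly delicate point is bookkeeping the reduction of $m$ into the correct residue window and handling the small case $m<k$ (where no reduction happens and $\sigma(W)=ms$ simply cannot be the idempotent since $m<k\leq\ell$); everything else is a direct unwinding of Lemma \ref{Lemma cyclic semigroup}, so I do not anticipate a real obstacle here. I would keep the exposition short by stating at the outset that $\sigma(W)=ms$ or $\sigma(W)$ is its canonical reduction, citing Lemma \ref{Lemma cyclic semigroup}.
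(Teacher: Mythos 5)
Your proposal is correct and is exactly the unwinding the paper has in mind: the paper omits the proof, stating only that the lemma "is easy to derive" from Lemma \ref{Lemma cyclic semigroup}, and your argument (reduce $m=\sum_{a\mid W}\mathrm{Ind}(a)$ into the window $[k,k+n-1]$ via the addition rule, identify $\mathbf{e}=\ell s$ with $\ell=\lceil k/n\rceil n$ as the unique multiple of $n$ in that window, and compare) is precisely that derivation. All the small points you flag (distinctness of $s,\ldots,(k+n-1)s$, the case $m<k$, uniqueness of the multiple of $n$ in the window) check out.
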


\begin{lemma}\label{Lemma Olson} \cite{Olson2} \ Let $n>0$, and let $G$ be a finite cyclic group of order $n$.  Then $N(T)\geq 2^{|T|-n+1}$ for any $T\in \mathcal{F}(G)$.
\end{lemma}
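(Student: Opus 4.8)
The plan is to prove, by induction on $|T|$, the following stronger statement: for every $g\in G=\mathbb{Z}/n\mathbb{Z}$ with $N(T;\{g\})\geq 1$, one has $N(T;\{g\})\geq 2^{|T|-n+1}$. Applying this with $g=0_G$ (for which $N(T;\{0_G\})\geq 1$ always, witnessed by the empty subsequence) then gives the lemma. Throughout, if $T=a_1\cdot\ldots\cdot a_{\ell}$ then $N(T;\{g\})=\#\{I\subseteq[1,\ell]:\sum_{i\in I}a_i=g\}$, i.e.\ subsequences are counted by their index sets (this is the counting in the statement, consistent with the optimal example $(n_1 g)$-type discussion in the introduction).

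The base case $|T|\leq n-1$ is immediate: then $2^{|T|-n+1}\leq 1\leq N(T;\{g\})$. For the inductive step, assume $|T|=\ell\geq n$ and $N(T;\{g\})\geq 1$. The first key step is to show that $g$ admits at least two distinct representations, i.e.\ index sets $J_1\neq J_2$ with $\sum_{i\in J_1}a_i=\sum_{i\in J_2}a_i=g$. Suppose not, so the representation $J$ is unique, and write $T=A\cdot B$ with $A=\mathop{\bullet}\limits_{i\in J}a_i$ and $B=\mathop{\bullet}\limits_{i\notin J}a_i$. Uniqueness forces both $A$ and $B$ to be zero-sum free (otherwise removing a nonempty zero-sum subsequence from $J$, or adjoining one disjoint from $J$, produces a second representation), and it forces $\Sigma(A)\cap\Sigma(B)=\emptyset$ (otherwise, if nonempty $A'\mid A$ and $B'\mid B$ satisfy $\sigma(A')=\sigma(B')$, replacing $A'$ by $B'$ inside $J$ produces a second representation). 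Since any zero-sum free sequence $W$ over an abelian group has $0_G\notin\Sigma(W)$ and $|\Sigma(W)|\geq |W|$ (the partial sums along an ordering of $W$ are $|W|$ distinct nonzero elements), we obtain $n-1=|G\setminus\{0_G\}|\geq |\Sigma(A)|+|\Sigma(B)|\geq |A|+|B|=\ell$, contradicting $\ell\geq n$.

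Given two representations $J_1\neq J_2$, choose an index $i$ in their symmetric difference, say $i\in J_1\setminus J_2$, and put $T^{(i)}=T\cdot a_i^{[-1]}$, so $|T^{(i)}|=\ell-1$. Splitting the index sets $I$ with $\sum_{i'\in I}a_{i'}=g$ according to whether $i\in I$ yields $N(T;\{g\})=N(T^{(i)};\{g\})+N(T^{(i)};\{g-a_i\})$. Now $J_2$ witnesses $N(T^{(i)};\{g\})\geq 1$ and $J_1\setminus\{i\}$ witnesses $N(T^{(i)};\{g-a_i\})\geq 1$, so the induction hypothesis applied to $T^{(i)}$ bounds each summand below by $2^{(\ell-1)-n+1}=2^{\ell-n}$; hence $N(T;\{g\})\geq 2^{\ell-n+1}$, which closes the induction.

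The crux is the first step of the inductive step — ruling out a unique representation of $g$ once $|T|\geq n$ — which is exactly where the order of $G$ (equivalently, the Davenport constant $D(\mathbb{Z}/n\mathbb{Z})=n$) enters; everything else is routine bookkeeping of subsequence counts. In carrying this out one should separately note the degenerate cases $A=\varepsilon$ (then $g=0_G$ and $T$ is itself zero-sum free) and $B=\varepsilon$ (then $J=[1,\ell]$ and $T$ is zero-sum free), in each of which the same length bound $\ell\leq n-1$ is reached.
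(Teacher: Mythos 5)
Your proof is correct. Note that the paper offers no proof of this lemma at all---it is quoted verbatim from Olson's 1969 paper \cite{Olson2}---so there is nothing internal to compare against; what you have written is a self-contained reconstruction, and it is sound. The induction on $|T|$ with the strengthened hypothesis (every \emph{representable} $g$ has at least $2^{|T|-n+1}$ representations) is exactly the right formulation to make the recursion $N(T;\{g\})=N(T^{(i)};\{g\})+N(T^{(i)};\{g-a_i\})$ close, and your justification of the crux---that a uniquely represented $g$ forces $T=A\cdot B$ with $A$, $B$ zero-sum free and $\Sigma(A)\cap\Sigma(B)=\emptyset$, whence $\ell=|A|+|B|\le|\Sigma(A)|+|\Sigma(B)|\le n-1$---is complete, including the degenerate cases $A=\varepsilon$ and $B=\varepsilon$. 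The partial-sum argument giving $|\Sigma(W)|\ge|W|$ for zero-sum free $W$ is where cyclicity is \emph{not} actually needed; your argument in fact proves $N(T;\{g\})\ge 2^{|T|-D(G)+1}$ for representable $g$ over any finite abelian group $G$ whenever one has the bound ``unique representation implies $|T|<D(G)$'', which for cyclic groups reduces to the $|\Sigma(W)|\ge|W|$ estimate you gave. This is in the spirit of Olson's original argument, and the generality is consistent with the remark in the paper that the bound $2^{|T|-n+1}$ is attained (e.g.\ by $T=g^{[n-1]}\cdot 0^{[|T|-n+1]}$), so no step of yours is lossy.
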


\begin{lemma} \label{Lemma IntegersAddition}
For any integers $t,n>0$ and any sequence $T$ of positive integers, there are at least $2^{|T|-tn+1}-1$ distinct nonempty subsequences $W\mid T$ such that $\sigma(W)\geq tn$ and $\sigma(W)\equiv 0\pmod{n}$.
\end{lemma}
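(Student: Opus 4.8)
The statement reduces the semigroup-flavored counting to a purely integer-theoretic one, so I would attack it directly by induction on $|T|$, with an auxiliary use of Lemma~\ref{Lemma Olson} to handle the ``period'' part. First, observe that if $|T| \leq tn-1$ then $2^{|T|-tn+1}-1 \leq 2^0 - 1 = 0$, so the claim is vacuous; hence we may assume $|T| \geq tn$. The natural base case to aim for is $|T| = tn$, where we need just one nonempty subsequence $W$ with $\sigma(W) \geq tn$ and $n \mid \sigma(W)$: taking $W = T$ works if $n \mid \sigma(T)$, but in general we need a bit more care, so I would instead bootstrap the base case from Lemma~\ref{Lemma Olson} applied to the image of $T$ in $\mathbb{Z}/n\mathbb{Z}$.

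**Key steps.** (1) Map $T$ to a sequence $\overline{T}$ over $G = \mathbb{Z}/n\mathbb{Z}$ by reducing each term mod $n$; by Lemma~\ref{Lemma Olson}, $N(\overline{T}) \geq 2^{|T|-n+1}$, i.e. there are at least $2^{|T|-n+1}$ distinct subsequences $W \mid T$ (including possibly the empty one) with $\sigma(W) \equiv 0 \pmod n$. (2) Among these, control how many can have $\sigma(W) < tn$: since each term is a positive integer $\geq 1$, a subsequence with small sum must be short, specifically $|W| \le \sigma(W) < tn$, so $|W| \le tn-1$; but I actually want to bound the number of \emph{bad} zero-mod-$n$ subsequences (those with $\sigma(W) < tn$ or $W$ empty) and show it is at most $2^{|T|-n+1} - (2^{|T|-tn+1}-1)$, leaving enough good ones. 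The cleaner route is an induction: if $|T| > tn$, pick any term $a$, apply the inductive hypothesis to $T \cdot a^{[-1]}$ to get $2^{|T|-1-tn+1}-1$ good subsequences not using $a$, and then produce new ones by adjoining $a$; the subtlety is that $W$ and $W\cdot a$ need not both be good. (3) Here is where I would use a pairing/shifting argument in the style of Olson's original proof of Lemma~\ref{Lemma Olson}: order the terms $a_1, \ldots, a_\ell$ arbitrarily, consider prefix sums, and use the pigeonhole principle on residues mod $n$ to extract ``consecutive-block'' subsequences with sum divisible by $n$; iterating $t$ times on disjoint blocks, or more efficiently running Olson's counting argument with the threshold $tn$ in place of $n$, should yield the exponent $|T|-tn+1$.

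**Main obstacle.** The delicate point is getting the \emph{exact} exponent $|T|-tn+1$ rather than something weaker like $|T|-tn$ or a bound only valid for large $|T|$, and simultaneously ensuring the subsequences counted are nonempty and pairwise distinct. A direct induction removing one term at a time loses the constant unless one is careful about which of $W$, $W\cdot a$ is retained; the robust fix is to mimic Olson's proof of Lemma~\ref{Lemma Olson} verbatim but replace the single zero-sum extraction by the stronger requirement $\sigma(W) \geq tn$. Concretely, I expect to argue: split $T$ greedily into blocks $B_1, B_2, \ldots$ each of which is a minimal subsequence with sum $\geq n$ and (after a mod-$n$ adjustment) sum divisible by $n$; since each $B_i$ has sum $< 2n$ hence length bounded, one gets roughly $(|T| - \text{error})/n$ blocks, and choosing any $t$ of them together with an arbitrary subset of the leftover terms and absorbing mod-$n$ corrections gives on the order of $2^{|T|-tn}$ choices; squeezing out the extra factor to reach $2^{|T|-tn+1}-1$ is the computational heart, and I would handle it by treating the last block as a ``free'' binary choice exactly as in the proof of the $2^{|T|-n+1}$ bound.
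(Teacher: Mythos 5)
There is a genuine gap here: none of the three routes you sketch is carried to completion, and you explicitly leave the ``computational heart'' (getting the exponent $|T|-tn+1$ while keeping the counted subsequences nonempty and distinct) unresolved. Your step (1) applies Lemma~\ref{Lemma Olson} over $\mathbb{Z}/n\mathbb{Z}$ and then tries to discard the bad subsequences with $\sigma(W)<tn$; there is no reason the number of good ones surviving that filter is as large as $2^{|T|-tn+1}-1$, and you do not supply one. Your steps (2)--(3) propose an induction and a greedy block decomposition in the style of Olson's original argument, but the induction founders exactly where you say it does (neither $W$ nor $W\cdot a$ need remain good), and the block decomposition is never made precise enough to verify the constant.

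The idea you brush past in passing --- ``running Olson's counting argument with the threshold $tn$ in place of $n$'' --- is almost the right one, but you do not need to rework Olson's \emph{proof} at all: it suffices to apply the \emph{statement} of Lemma~\ref{Lemma Olson} to the cyclic group of order $m=tn$. Reduce each term of $T$ modulo $m$ to get a sequence over $\mathbb{Z}/m\mathbb{Z}$; Lemma~\ref{Lemma Olson} gives $N\geq 2^{|T|-m+1}$, hence at least $2^{|T|-tn+1}-1$ distinct \emph{nonempty} subsequences $W$ with $\sigma(W)\equiv 0\pmod{m}$. Since every term of $T$ is a positive integer and $W$ is nonempty, $\sigma(W)>0$, so $m\mid\sigma(W)$ forces $\sigma(W)\geq m=tn$; and $tn\mid\sigma(W)$ trivially implies $n\mid\sigma(W)$. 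This is the paper's entire proof. The positivity hypothesis on the terms is doing all the work of converting the congruence condition modulo $tn$ into the size condition $\sigma(W)\geq tn$ for free, which is precisely the leverage your proposal fails to exploit.
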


\begin{proof} Consider the sequence $\mathop{\bullet}\limits_{a\mid T} (a+m \mathbb{Z})$ over the cyclic group $\mathbb{Z}\diagup m \mathbb{Z}$, where $m=tn$. It follows from Lemma \ref{Lemma Olson} that $N(\mathop{\bullet}\limits_{a\mid T} (a+m \mathbb{Z}))\geq 2^{|\mathop{\bullet}\limits_{a\mid T} (a+m \mathbb{Z})|-m+1}=2^{|T|-m+1}$. That is, $T$ contains at least $2^{|T|-m+1}-1$ distinct nonempty subsequences $W$ such that $\sigma(W)\equiv 0\pmod {m}$. It follows that  $\sigma(W)\equiv 0\pmod {n}$, and that $\sigma(W)\geq m=tn$ because $W$ is nonempty, we are done.
 \end{proof}

The following lemma follows from a straightforward calculation. Below we give a short combinatorial argument for it.

\begin{lemma}\label{Lemma CombinatoricEquation}  For any integers  $m\geq 0$ and $h,n>0$, we have $\sum\limits_{j\geq h} {m\choose j n}\geq 2^{m-hn+1}-1$.
\end{lemma}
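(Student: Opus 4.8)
The plan is to give a direct combinatorial interpretation of the left-hand side and then bound it below by a cleverly chosen, easily countable sub-collection of the objects being summed. Concretely, $\sum_{j\geq h}\binom{m}{jn}$ counts the number of subsets $A\subseteq [1,m]$ whose cardinality $|A|$ is a multiple of $n$ and satisfies $|A|\geq hn$. So it suffices to exhibit at least $2^{m-hn+1}-1$ such subsets.

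First I would dispose of the trivial range: if $m<hn$ then $2^{m-hn+1}-1\le 2^0-1=0$ (for $m=hn-1$) or is negative, while the left-hand side is a sum of non-negative integers, so the inequality holds vacuously; hence assume $m\ge hn$. Now fix the block $B=[1,hn]\subseteq[1,m]$, which has size exactly $hn\equiv 0\pmod n$, and let $R=[hn+1,m]$, so $|R|=m-hn$. For every subset $C\subseteq R$, form $A_C = B\cup C'$, where $C'$ is obtained from $C$ by \emph{discarding at most $n-1$ of its elements} so that $|C'|$ becomes the largest multiple of $n$ that is $\le |C|$; then $|A_C|=hn+|C'|$ is a multiple of $n$ and is $\ge hn$, so each $A_C$ is one of the subsets we want to count. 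The map $C\mapsto A_C$ need not be injective, but the preimage of any given $A$ has size at most... — here it is cleaner to run the count the other way.

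The cleanest route: partition the $2^{m-hn}$ subsets of $R$ according to the residue of their size modulo $n$; one of the $n$ residue classes, say the class of subsets $C\subseteq R$ with $|C|\equiv r\pmod n$, contributes at least $\frac{1}{n}2^{m-hn}$ subsets — but a factor $\tfrac1n$ is too lossy. Instead I would argue as follows. For each $t\in[0,n-1]$ let $f(t)$ be the number of subsets of $R$ of size $\equiv t\pmod n$; then $\sum_{t=0}^{n-1} f(t)=2^{m-hn}$ and, pairing each subset with its complement in $R$, one sees the distribution is symmetric enough that $f(0)\ge \frac{2^{m-hn}}{?}$ — still lossy. The robust argument that gives exactly the stated constant is the \emph{injection into $R$} idea done correctly: to each subset $D\subseteq R$ with $|D|\equiv 0\pmod n$ assign the subset $B\cup D\subseteq[1,m]$ (size $\ge hn$, multiple of $n$), which is injective; this already shows the left side is at least $N$, the number of subsets of an $(m-hn)$-element set whose size is divisible by $n$. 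Finally, $N\ge 2^{m-hn+1}-1$ is precisely the $h=1$, ``$m$''$\,\mapsto m-hn$ instance — equivalently it is Lemma \ref{Lemma Olson} applied to the all-$1$ sequence of length $m-hn$ over $\mathbb{Z}/n\mathbb{Z}$ (every nonempty subsequence summing to $0$ has length a positive multiple of $n$, and there are at least $2^{(m-hn)-n+1}-1$ of them, plus the empty one, but we want size $\equiv 0$, not $>0$)...

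To avoid circularity and get the constant on the nose, I would instead quote Lemma \ref{Lemma IntegersAddition} (or re-derive it inline) with $T$ the sequence $\underbrace{1\cdot\ldots\cdot 1}_{m}$ and the given $n$: it produces at least $2^{m-hn+1}-1$ distinct \emph{nonempty} subsequences $W$ with $\sigma(W)\equiv 0\pmod n$ and $\sigma(W)\ge hn$; since $\sigma(W)=|W|$ here, these $W$ are exactly subsets of $[1,m]$ of size $jn$ for some $j\ge h$, and distinct subsequences of an all-$1$ sequence are distinct subsets. Hence $\sum_{j\ge h}\binom{m}{jn}\ge 2^{m-hn+1}-1$. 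I expect the only real subtlety to be making sure the chosen bounding family is counted without over- or under-counting — i.e. that ``distinct subsequences of the constant sequence'' genuinely biject with subsets of prescribed size — which is immediate, so the lemma follows with essentially no computation.
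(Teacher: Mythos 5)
Your final argument---apply Lemma \ref{Lemma IntegersAddition} with $t=h$ to the constant sequence $1^{[m]}$, note that $\sigma(W)=|W|$ so the qualifying subsequences are exactly the index subsets of size $jn$ with $j\geq h$, counted (under the paper's index-set convention for distinctness) by $\sum_{j\geq h}\binom{m}{jn}$---is precisely the proof given in the paper. The earlier abandoned attempts (block decomposition, residue-class counting) are correctly discarded, and the argument you commit to is complete.
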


\begin{proof} If $m=0$, the conclusion is trivial. Then we assume $m>0$.
Take a sequence $T=1^{[m]}\in \mathcal{F}({\mathbb{Z}})$. We see that $T$ contains exactly $\sum\limits_{j\geq h}{m\choose j n}$ distinct nonempty subsequences $W$ with $\sigma(W)\geq hn$ and $\sigma(W)\equiv 0\pmod n$. Then the conclusion follows from Lemma \ref{Lemma IntegersAddition} immediately.
\end{proof}

\begin{lemma} \label{Lemma last lemma}\  Let $H, H_1,H_2$ be nonempty sequences of positive integers with $|H|\geq 2$.

(i) If $H_1$ is $1$-smooth and $\sigma(H_2)\leq \sigma(H_1)+1$, then $H_1\cdot H_2$ is $1$-smooth;

(ii) If both $H_1$ and $H_2$ are $1$-smooth then $H_1\cdot H_2$ is $1$-smooth;

(iii) If $h$ is the least term of $H$ and $H\cdot h^{[-1]}$ is $1$-smooth, then $H$ is $1$-smooth.
\end{lemma}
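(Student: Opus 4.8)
The plan is to run all three parts through the fact, noted just before Observation~A, that a nonempty sequence $W\in\mathcal{F}(\mathbb{Z})$ of positive integers is $1$-smooth if and only if $\Sigma(W)=[1,\sigma(W)]$. Because every term occurring in $H$, $H_1$, $H_2$ is a positive integer, in parts~(i) and~(ii) the inclusion $\Sigma(H_1\cdot H_2)\subseteq[1,\sigma(H_1\cdot H_2)]=[1,\sigma(H_1)+\sigma(H_2)]$ is automatic, so in each case it remains only to establish the reverse inclusion, i.e., to realize every integer $m\in[1,\sigma(H_1)+\sigma(H_2)]$ as the sum of some subsequence of $H_1\cdot H_2$.

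For~(i), fix such an $m$. If $m\le\sigma(H_1)$, then $m\in\Sigma(H_1)\subseteq\Sigma(H_1\cdot H_2)$ since $H_1$ is $1$-smooth. If $m>\sigma(H_1)$, set $r=m-\sigma(H_2)$; then $0\le r\le\sigma(H_1)$, because $r=m-\sigma(H_2)>\sigma(H_1)-\sigma(H_2)\ge-1$ (here $\sigma(H_2)\le\sigma(H_1)+1$ is used) and $r=m-\sigma(H_2)\le(\sigma(H_1)+\sigma(H_2))-\sigma(H_2)=\sigma(H_1)$. If $r=0$ then all of $H_2$ sums to $m$; if $1\le r\le\sigma(H_1)$ then $r=\sigma(H_1')$ for some nonempty $H_1'\mid H_1$, whence $H_1'\cdot H_2$ sums to $m$. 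In every case $m\in\Sigma(H_1\cdot H_2)$, which proves~(i).

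For~(ii), I argue in the same spirit: if $m\le\sigma(H_1)$, use a subsequence of $H_1$; if $\sigma(H_1)<m\le\sigma(H_1)+\sigma(H_2)$, then $m-\sigma(H_1)\in[1,\sigma(H_2)]=\Sigma(H_2)$ (now using that $H_2$ is $1$-smooth), so appending the whole of $H_1$ to a subsequence of $H_2$ of sum $m-\sigma(H_1)$ realizes $m$. (Alternatively, one can reduce~(ii) to~(i) by induction on $|H_2|$, at each step peeling off a largest term of $H_2$; the remainder is again $1$-smooth by Observation~A, and the inequality required to apply~(i) is precisely Observation~A(iii) for $H_2$, weakened using $\sigma(H_1)\ge0$.)

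For~(iii), note first that $1\in\Sigma(H\cdot h^{[-1]})$ because $H\cdot h^{[-1]}$ is $1$-smooth, and since every term is a positive integer the only way to obtain a subsequence sum equal to $1$ is from a single term equal to $1$; hence the least term $h$ of $H$ satisfies $h\le1$, so $h=1$. Now $H=(H\cdot h^{[-1]})\cdot h^{[1]}$, where $H\cdot h^{[-1]}$ is nonempty (this is where $|H|\ge2$ is used) and $1$-smooth, and $\sigma(h^{[1]})=1\le\sigma(H\cdot h^{[-1]})+1$; hence part~(i), applied with $H_1:=H\cdot h^{[-1]}$ and $H_2:=h^{[1]}$, shows that $H$ is $1$-smooth. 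None of the three steps is genuinely hard; the only points that need care are the boundary case $r=0$ in~(i) (where the subsequence chosen from $H_1$ would be empty, so one takes all of $H_2$ instead), the sorted representation of $H_2$ needed to invoke Observation~A in the inductive variant of~(ii), and the reduction $h=1$ in~(iii), which is exactly where the hypotheses $|H|\ge2$ and ``$h$ is the least term of $H$'' get used.
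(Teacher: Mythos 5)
Your proof is correct and follows essentially the same route as the paper's (one-line) proof: part (i) is verified directly from the characterization $\Sigma(W)=[1,\sigma(W)]$ of $1$-smoothness, and parts (ii) and (iii) are deduced from (i). You have simply supplied the details (the case split on $m$, the boundary case $r=0$, and the observation that $h=1$ in (iii)) that the paper leaves to the reader.
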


\begin{proof} (i) follows from the definition of $1$-smooth sequences. (ii) and (iii) follow from (i).
\end{proof}

\begin{lemma}\cite{Wangstructureincyclic} \label{Lemma behaving for integers} \ For $\ell\geq 1$, let $T$ be a sequence of positive integers  with $|T|=\ell$ such that $T$ is not $1$-smooth. Then $\sigma(T)\geq 2\ell$. Moreover, the equality $\sigma(T)= 2\ell$ holds if and only if either $T=1^{[\ell-1]}\cdot (\ell+1)$ or $T=2^{[\ell]}.$
\end{lemma}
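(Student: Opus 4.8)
The plan is to argue directly from the description of $1$-smooth sequences of positive integers given by Observation~A (with $g=1$)---equivalently, a nonempty $T\in\mathcal F(\mathbb Z)$ is $1$-smooth if and only if $\Sigma(T)=[1,\sigma(T)]$---organising the proof around $t:={\rm v}_1(T)$, the multiplicity of the term $1$ in $T$. If $t=0$, every term of $T$ is at least $2$, so $\sigma(T)\ge 2\ell$ at once, with equality forcing every term to equal $2$, i.e.\ $T=2^{[\ell]}$; and $2^{[\ell]}$ really is not $1$-smooth, since $1\notin\Sigma(2^{[\ell]})$. So the content lies in the case $t\ge 1$. Here, since $1^{[\ell]}$ is $1$-smooth, the hypothesis that $T$ is not $1$-smooth forces $t<\ell$, and in particular $\ell\ge 2$.

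Write the terms of $T$ in nondecreasing order $c_1\le c_2\le\cdots\le c_\ell$, so that $c_1=\cdots=c_t=1$ and $c_{t+1},\dots,c_\ell\ge 2$. Because $1\in\Sigma(T)$, condition (i) of Observation~A holds; since $T$ is not $1$-smooth, condition (iii) must fail, so there is a \emph{least} index $m\in[2,\ell]$ with $c_m>1+\sum_{i=1}^{m-1}c_i$, i.e.\ $c_m\ge 2+s$ where $s:=\sum_{i=1}^{m-1}c_i$; minimality of $m$ together with $c_2=\cdots=c_t=1$ forces $m\ge t+1$. I would then estimate
\[
\sigma(T)=s+c_m+\sum_{i=m+1}^{\ell}c_i\ \ge\ s+(2+s)+2(\ell-m)\ =\ 2s+2+2(\ell-m),
\]
and substitute $s=t+\sum_{i=t+1}^{m-1}c_i\ge t+2(m-1-t)=2m-2-t$ to get, after a one-line simplification, $\sigma(T)\ge 2\ell+2(m-t-1)\ge 2\ell$.

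For the equality clause I would run the chain of inequalities backwards: $\sigma(T)=2\ell$ forces $m=t+1$, forces $c_i=2$ for all $i>m$, forces $c_m=2+s$, and forces $s=2m-2-t=t$, hence $c_m=t+2$. The one delicate point---the heart of the equality analysis---is ruling out terms beyond $c_m$: if $m<\ell$, the ordering would give $c_m\le c_{m+1}=2$, contradicting $c_m=t+2\ge 3$; so $m=\ell$, whence $t=\ell-1$ and $c_\ell=\ell+1$, i.e.\ $T=1^{[\ell-1]}\cdot(\ell+1)$. To close, I would verify the converse, namely that both $2^{[\ell]}$ (whose subsequence sums omit $1$) and $1^{[\ell-1]}\cdot(\ell+1)$ (whose subsequence sums omit $\ell$ from $[1,2\ell]$) are indeed not $1$-smooth, and check that the degenerate case $\ell=1$, in which both candidate sequences collapse to the single sequence $(2)$, is consistent with the statement. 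I do not anticipate a genuine obstruction; the only points that need care are the bookkeeping of which terms are charged to $s$ versus to the tail $c_{m+1}\cdot\ldots\cdot c_\ell$ in the displayed estimate, and the two edge cases just mentioned. (An alternative would be an induction on $\ell$ peeling off a copy of the term $1$ and invoking Lemma~\ref{Lemma last lemma}(iii), but that route produces an off-by-one in the bound that must then be repaired by a separate structural argument, so the direct route above seems cleaner.)
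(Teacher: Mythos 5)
Your proof is correct and complete. Note that the paper does not actually prove this lemma --- it imports it from \cite{Wangstructureincyclic} --- so there is no in-paper argument to compare against; your write-up is a valid self-contained substitute. The case split on $t={\rm v}_1(T)$, the use of the least failing index $m$ in condition (iii) of Observation~A (condition (ii) being vacuous for $g=1$ in $\mathbb{Z}$), the chain $\sigma(T)\ge 2\ell+2(m-t-1)$, and the equality analysis forcing $m=t+1$ and then $m=\ell$ via the ordering $c_m\le c_{m+1}$ all check out, as do the verifications that $2^{[\ell]}$ and $1^{[\ell-1]}\cdot(\ell+1)$ are not $1$-smooth and the degenerate case $\ell=1$.
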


  \begin{lemma} \label{SachenChen} (see \cite{GRuzsa}, Theorem 5.1.8 and Corollary 5.1.10) \  Let $G$ be a finite cyclic group of order $n\geq 3$. If $T\in \mathcal{F}(G)$ is zero-sum free of length at least $\lfloor\frac{n}{2}\rfloor+1$, then there exists some $g\in G$ with ${\rm ord}(g)=n$ such that the following two conclusions hold:

  (i) $T$ is $g$-smooth;

  (ii) ${\rm v}_g(T)\geq 2|T|-n+1$.
 \end{lemma}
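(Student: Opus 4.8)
The plan is to reprove the Savchev--Chen structure theorem, whose statement is precisely Theorem 5.1.8 together with Corollary 5.1.10 of \cite{GRuzsa} (so one may of course simply quote it); here is how I would attack it directly. Write $T=g_1\cdot\ldots\cdot g_\ell$ with $\ell=|T|\ge\lfloor n/2\rfloor+1$, and put ${\rm supp}(T)=\{a\in G:{\rm v}_a(T)>0\}$. I would first isolate the \emph{core claim}: there is a generator $g$ of $G$ such that $T$ is $g$-smooth. Granting it, part (i) is immediate, and part (ii) is a one-line count: writing the $g$-smooth sequence as $T=(n_1g)\cdot\ldots\cdot(n_\ell g)$ with $1=n_1\le\cdots\le n_\ell$ and $k:=n_1+\cdots+n_\ell<{\rm ord}(g)=n$ (Definition \ref{Definition behaving sequence}), one has ${\rm v}_g(T)=\#\{i:n_i=1\}=:j$ because $1\le n_i\le k<n$, whereas each of the remaining $\ell-j$ terms satisfies $n_i\ge2$; hence $n-1\ge k\ge j+2(\ell-j)=2\ell-j$, i.e.\ ${\rm v}_g(T)=j\ge2\ell-n+1=2|T|-n+1$.

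Towards the core claim I would begin with two reductions. First, $\langle{\rm supp}(T)\rangle=G$: otherwise ${\rm supp}(T)$ lies in a subgroup $H$ whose order is a proper divisor of $n$, hence at most $n/2$, and then $T$ would be a zero-sum free sequence over $H$, giving $\ell\le|H|-1\le n/2-1<\lfloor n/2\rfloor+1\le\ell$ (the Davenport constant of a cyclic group of order $d$ being $d$), a contradiction. Second, $T$ being zero-sum free gives $0_G\notin\Sigma(T)$, so $A:=\Sigma(T)\cup\{0_G\}$ has $|A|\le n$; and since a zero-sum free sequence can always be ordered so that its $\ell+1$ partial sums $0_G,\,g_1,\,g_1+g_2,\,\ldots,\,\sigma(T)$ are pairwise distinct (equivalently $|\Sigma(T)|\ge|T|$; classical, see e.g.\ \cite{GH} or \cite{GRuzsa}), one gets $|A|\ge\ell+1\ge\lfloor n/2\rfloor+2>n/2$. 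Note in addition that $A=\{0_G,g_1\}+\{0_G,g_2\}+\cdots+\{0_G,g_\ell\}$ as a sumset in $G$.

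The heart of the matter --- and the step I expect to be the main obstacle --- is to show that $A$ is an arithmetic progression $\{0_G,g,2g,\ldots,kg\}$ whose common difference $g$ has order $n$ and occurs as a term of $T$; this is precisely the combinatorial core of Savchev and Chen. I would obtain it either by feeding the sumset description of $A$ above into Kneser's addition theorem, or by running the direct peeling argument --- order $T$ with pairwise distinct partial sums, watch the set of partial sums grow as terms are adjoined, and use that $|A|>n/2$ forces every nonzero translate $A+x$ to meet $A$ in at least $2|A|-n$ points --- the genuinely delicate points being to rule out a nontrivial proper period of $A$ and to arrange the base point of the progression to be a term of $T$, both of which use $\langle{\rm supp}(T)\rangle=G$ and absorb most of the work. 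Once $A$ has this form one has $k=|\Sigma(T)|\le n-1$, and, writing $T=(n_1g)\cdot\ldots\cdot(n_\ell g)$ with $1=n_1\le\cdots\le n_\ell$ (all $n_i\in[1,k]$, and $n_1=1$ because $g$ occurs as a term), zero-sum freeness of $T$ excludes wrap-around and forces $\Sigma(n_1\cdot\ldots\cdot n_\ell)=[1,k]$ for the attached integer sequence; by the characterization of $1$-smooth integer sequences recorded just after Definition \ref{Definition behaving sequence}, together with Observation A, $T$ is $g$-smooth. This establishes the core claim, and then (i) and (ii) follow as in the first paragraph. \qfd
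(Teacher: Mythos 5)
The paper gives no proof of this lemma at all: it is imported verbatim as the Savchev--Chen structure theorem in the form recorded in \cite{GRuzsa} (Theorem 5.1.8 and Corollary 5.1.10), so your primary suggestion --- simply quote it --- is exactly what the authors do. Your surrounding bookkeeping is also correct: the deduction of (ii) from (i) via $n-1\ge\sum n_i\ge j+2(\ell-j)$, the observation that $\mathrm{v}_g(T)=\#\{i:n_i=1\}$ because each $n_i$ lies in $[1,n-1]$, the reduction $\langle\mathrm{supp}(T)\rangle=G$ via the Davenport constant of a proper subgroup, and the bound $|\Sigma(T)|\ge|T|$ are all sound.

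However, if the sketch is meant to stand as an independent proof, it has a genuine gap at precisely the step you flag as the main obstacle: establishing that $A=\Sigma(T)\cup\{0_G\}$ is an arithmetic progression whose difference is a generator occurring as a term of $T$. Neither of your two proposed routes closes it as stated. Kneser's theorem applied to $A=\{0_G,g_1\}+\cdots+\{0_G,g_\ell\}$ only returns the cardinality bound $|A|\ge\ell+1$ (when the stabilizer is trivial) and says nothing about $A$ being a progression; to extract structure from $|A|>n/2$ one needs a Vosper/Freiman-type inverse statement or the full Savchev--Chen peeling argument, which occupies several pages and is not reproduced here. Moreover, even granting that $A$ is a progression with difference $g$, the passage from ``$\Sigma(T)=\{g,\ldots,kg\}$ with $k\le n-1$'' to ``every term of $T$ equals $n_ig$ with \emph{small} $n_i$ summing to exactly $k$ without wrap-around'' is itself a nontrivial part of the theorem (a term could a priori be $mg$ with $m$ large and only its residue class relevant), and your one-sentence appeal to zero-sum freeness does not dispose of it. So the correct reading of your proposal is: cite the theorem, as the paper does; the attempted reproof is an accurate roadmap but not a proof.
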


\begin{lemma}\label{Lemma instant} \ Let $n>0$ be an integer, and let $T$ be a sequence of positive integers of length $|T|\geq n$ such that $T$ is not $1$-smooth.  Then there exists a nonempty subsequence $K$ of $T$ such that $\sigma(K)\equiv 0\pmod n$ and $\sigma(K) \geq 2|K|
\geq 2(|T|-(n-1))$.
\end{lemma}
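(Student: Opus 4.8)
The plan is to iterate a ``peel off a large $1$-smooth chunk'' argument, using Lemma~\ref{Lemma behaving for integers} as the key quantitative input. Since $T$ is not $1$-smooth, Lemma~\ref{Lemma behaving for integers} gives $\sigma(T) \geq 2|T| \geq 2n$. The idea is to extract a nonempty subsequence $K$ of $T$ whose sum is $\geq 2|K|$ and is divisible by $n$, while keeping $|K|$ as large as possible — ideally $|K| \geq |T| - (n-1)$.

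First I would reduce to finding \emph{any} nonempty $K \mid T$ with $\sigma(K) \equiv 0 \pmod n$ and $\sigma(K) \geq 2|K|$, and then argue the length bound $|K| \geq |T| - (n-1)$ separately. For the divisibility: look at the partial sums $s_0 = 0, s_1, \ldots, s_\ell$ (for some ordering of $T$) and apply a pigeonhole / Davenport-type argument on residues mod $n$; more cleanly, apply Lemma~\ref{Lemma IntegersAddition} with $t = 1$ to get $2^{|T|-n+1} - 1$ nonempty subsequences $W$ with $\sigma(W) \geq n$ and $n \mid \sigma(W)$. The subtlety is that such a $W$ need not satisfy $\sigma(W) \geq 2|W|$ — it could be too ``dense'' with small terms. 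So the real work is to select, among the candidates, one that is simultaneously long and not $1$-smooth (equivalently, by Lemma~\ref{Lemma behaving for integers}, has sum $\geq 2|K|$).

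The main step, then: among all subsequences $T'$ of $T$ with $|T'| \geq |T| - (n-1)$, I want one that is not $1$-smooth. Suppose for contradiction every such $T'$ is $1$-smooth. Removing any single term leaves a subsequence of length $|T| - 1 \geq |T| - (n-1)$, hence $1$-smooth (assuming $|T| \geq 2$; the case $|T| < n$ trivial or $|T|$ small handled directly since then $|T| - (n-1) \leq 1$ and the statement is weak). Now take $h$ the least term of $T$: then $T \cdot h^{[-1]}$ has length $|T| - 1$, and if $|T| - 1 \geq |T| - (n-1)$ it is $1$-smooth, so by Lemma~\ref{Lemma last lemma}(iii) $T$ itself is $1$-smooth — contradiction. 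Thus there is a non-$1$-smooth subsequence $T'$ with $|T'| \geq |T| - (n-1) \geq 1$; by Lemma~\ref{Lemma behaving for integers}, $\sigma(T') \geq 2|T'|$.

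It remains to fix divisibility by $n$ without destroying too much length or the sum bound. Since $\sigma(T') \geq 2|T'| \geq 2(|T|-(n-1))$, and I want $n \mid \sigma(K)$: I would enlarge or shrink $T'$ inside $T$. Note $\sigma(T') \geq 2|T'| \geq 2n$ when $|T| \geq 2n - 1$; reducing $\sigma(T')$ by at most $n-1$ to hit a multiple of $n$ can be done by deleting a suitable sub-subsequence, but this may hurt the $\sigma(K) \geq 2|K|$ inequality. The cleanest fix: among subsequences of $T$ of length exactly $|T| - (n-1)$ (if $|T| \geq n$, this is $\geq 1$), pick one, call it $T_0$; it has $\sigma(T_0) \geq n$. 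Then apply Lemma~\ref{Lemma IntegersAddition} \emph{inside} a carefully chosen superset, or better: apply the partial-sums pigeonhole directly to a non-$1$-smooth $T'$ to find $K \mid T'$ with $n \mid \sigma(K)$ and $\sigma(K)$ within $n$ of $\sigma(T')$, then verify $\sigma(K) \geq 2|K|$ using that we only deleted few terms (each of size $\geq 1$) from something with a factor-$2$ surplus in its sum. I expect the delicate bookkeeping here — ensuring the deletion needed for divisibility does not violate $\sigma(K) \geq 2|K|$, and simultaneously keeping $|K|$ large enough — to be the main obstacle, and I would resolve it by splitting into cases according to whether $|T| - (n-1)$ is small (direct check) or large (surplus in $\sigma(T') - 2|T'|$ absorbs the deletion).
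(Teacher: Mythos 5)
There is a genuine gap: your proposal never actually produces a single subsequence $K$ satisfying all three constraints ($n\mid\sigma(K)$, $\sigma(K)\geq 2|K|$, and $|K|\geq |T|-(n-1)$) at once, and you concede as much in the final paragraph. The two-stage plan is structurally problematic. If you first fix a non-$1$-smooth $T'$ of length $|T|-1$ (which your Lemma~\ref{Lemma last lemma}(iii) argument does give) and then delete terms to force $n\mid\sigma(K)$, you may have to delete up to $n-1$ further terms, landing at $|K|\geq |T|-n$ rather than $|T|-(n-1)$; worse, the claim that the divisibility fix costs at most $n$ in the sum is false (for $T'=2^{[\ell]}$ and $n$ odd the nearest admissible sum can be $2(n-1)$ below $\sigma(T')$), and whether $\sigma(K)\geq 2|K|$ survives the deletion depends entirely on \emph{which} terms are deleted (deleting the one large term from $1^{[\ell-1]}\cdot(\ell+1)$ leaves $\sigma(K)=|K|$). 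No case split is indicated that resolves this.

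The paper's proof supplies exactly the missing idea: choose $K$ \emph{extremally}, namely with $n\mid\sigma(K)$ and $\sigma(K)$ maximal. Maximality is then exploited twice. First, the complement $T\cdot K^{[-1]}$ cannot contain a nonempty subsequence with sum $\equiv 0\pmod n$, so by the Davenport constant $|T\cdot K^{[-1]}|\leq n-1$, giving the length bound directly for $K$ itself (no off-by-one). Second, an exchange argument shows $K$ is not $1$-smooth: if it were, then since $T$ is not $1$-smooth, Lemma~\ref{Lemma last lemma}(i) forces a term $b$ outside $K$ with $b>\sigma(K)+1$, and because $\Sigma(K)=[1,\sigma(K)]\supseteq[1,n]$ one finds $L\mid K$ with $\sigma(L)\equiv -b\pmod n$, so $L\cdot b$ beats $K$, a contradiction. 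Then Lemma~\ref{Lemma behaving for integers} applied to $K$ gives $\sigma(K)\geq 2|K|$. This single extremal choice is what makes the three constraints compatible, and it is the step your proposal is missing.
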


\begin{proof} Let $K$ be
a subsequence of $T$ such that
\begin{equation}\label{equationsumainKcongruentpmodn1}
\sigma(K)\equiv 0\pmod n
\end{equation}
with $\sigma(K)$ being maximal.
Then
\begin{equation}\label{equation |T|-|U| less than n}
|T\cdot K^{[-1]}|\leq {\rm D}(\mathbb{Z}\diagup n \mathbb{Z})-1=n-1,
\end{equation}
and $|K|=|T|-|T\cdot K^{[-1]}|\geq n-(n-1)=1$, which implies that
$\sigma(K) \geq n$  by \eqref{equationsumainKcongruentpmodn1}.

Suppose that $K$ is $1$-smooth. Since $T$ is not 1-smooth, it follows from Lemma \ref{Lemma last lemma} (i) that there exists at least one term $b\mid T\cdot K^{[-1]}$ such that $b>\sigma(K)+1$. Since $K$ is $1$-smooth, i.e., $\sum(K)=[1,\sigma(K)]\supseteq [1,n]$, we can take a subsequence $L\mid K$ such that $\sigma(L)\equiv -b\pmod n$. It follows that $L\cdot b$ is a subsequence of $T$ such that $\sigma(L\cdot b)\equiv 0\pmod n$ and $\sigma(L\cdot b)\geq b>\sigma(K)$, a contradiction with the maximality of $\sigma(K)$. Hence, $K$ is not 1-smooth.

Then it follows from \eqref{equation |T|-|U| less than n} and Lemma \ref{Lemma behaving for integers} that
$\sigma(K)
\geq  2|K|=2(|T|-|T\cdot K^{[-1]}|)\geq  2(|T|-(n-1))$, completing the proof.
\end{proof}

To proceed, we need the following definition.

\begin{definition}  Let $G$ be an abelian group, and let $T\in \mathcal{F}(G)$ be a nonempty sequence.
For any $1\leq r\leq |T|$, let $\mathcal{W}_r(T)$ be the set of all signed subsequences of $T$ with length $r$, i.e.,
$$\mathcal{W}_r(T)=\{\mathop{\bullet}\limits_{a\mid V} (\epsilon_a \ a) : \epsilon_a\in \{1,-1\}, V\mid T \mbox{ with } |V|=r\}.$$
\end{definition}

\begin{lemma} \label{Lemma containsasmootheshortsequence} Let $G$ be an abelian group,
and let $T\in \mathcal{F}(G)$ be a nonempty sequence. For any $r\in [1,|T|-1]$, if $\min\{N(L_{r}): L_{r}\in  \mathcal{W}_{r}(T)\}>1$ then
$$\min\{N(L_{r+1}): L_{r+1}\in  \mathcal{W}_{r+1}(T)\}\geq 2 \min\{N(L_{r}): L_{r}\in  \mathcal{W}_{r}(T)\}.$$
\end{lemma}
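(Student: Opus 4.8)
The plan is to fix $r\in[1,|T|-1]$, assume $\min\{N(L_r): L_r\in\mathcal W_r(T)\}>1$, and produce for an arbitrary $L_{r+1}\in\mathcal W_{r+1}(T)$ a lower bound $N(L_{r+1})\ge 2\min\{N(L_r): L_r\in\mathcal W_r(T)\}$. Write $m:=\min\{N(L_r): L_r\in\mathcal W_r(T)\}$. The key structural observation is that a signed subsequence $L_{r+1}$ of $T$ of length $r+1$ can be written as $L_{r+1}=L_r'\cdot(\epsilon a)$ where $L_r'\in\mathcal W_r(T)$ is obtained by deleting one term and $\epsilon a$ is the remaining (signed) term — and there are several ways to do this, which is what we will exploit. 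Concretely, pick any term $\epsilon a\mid L_{r+1}$ and set $L_r':=L_{r+1}\cdot(\epsilon a)^{[-1]}$, so $L_r'\in\mathcal W_r(T)$ and hence $N(L_r')\ge m$.

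Next I would classify the subsequences $W\mid L_{r+1}$ with $\sigma(W)=0_G$ according to whether they contain the distinguished term $\epsilon a$. Those not containing $\epsilon a$ are exactly the zero-sum subsequences of $L_r'$, and there are $N(L_r')\ge m$ of them. To handle the others, I would produce a second, ``shifted'' copy: consider the signed subsequence $L_r'':=L_{r+1}\cdot(\epsilon a)^{[-1]}\cdot\big((-\epsilon')b\big)$ obtained by instead deleting a \emph{different} term $\epsilon' b\mid L_{r+1}$ and flipping its sign — this again lies in $\mathcal W_r(T)$, so $N(L_r'')\ge m$, and each zero-sum subsequence $W''$ of $L_r''$ containing the term $(-\epsilon')b$ corresponds, after replacing $(-\epsilon')b$ by $(\epsilon a)\cdot(\epsilon' b)\cdot((-\epsilon')b)$... — this is where care is needed. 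A cleaner route: among the $2^{r+1}$ sign choices, or more simply, among the $r+1$ terms of $L_{r+1}$, use a telescoping/pairing argument. For a fixed term $\epsilon a\mid L_{r+1}$, write $L_r':=L_{r+1}\cdot(\epsilon a)^{[-1]}$ and also consider the sign-flipped length-$r$ signed subsequence $\widetilde L_r$ of $T$ gotten by deleting $\epsilon a$ and flipping the sign of one other term $\epsilon'b$ to $-\epsilon'b$: then zero-sum subsequences of $L_{r+1}$ \emph{containing} $\epsilon a$ but \emph{not} $\epsilon'b$ biject with zero-sum subsequences of $\widetilde L_r$ containing $-\epsilon'b$, by swapping $\epsilon a\leftrightarrow(-\epsilon'b)$. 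Summing over an appropriate choice of the auxiliary term, and using $N(\widetilde L_r)\ge m$, should give at least $m$ further zero-sum subsequences of $L_{r+1}$, disjoint from the first batch.

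The main obstacle is bookkeeping: ensuring the two families of zero-sum subsequences of $L_{r+1}$ (those avoiding $\epsilon a$, counted by $N(L_r')$, and those obtained from the shifted copy $\widetilde L_r$) are genuinely disjoint and that the correspondence in the second family is an honest injection, so that no zero-sum subsequence is double-counted. In particular one must check the edge cases where the empty zero-sum subsequence and the singleton cases interact with the sign flips, and make sure the hypothesis $m>1$ (rather than merely $m\ge1$) is used exactly where the shifted family could otherwise be empty. Once disjointness is established, $N(L_{r+1})\ge N(L_r')+m\ge m+m=2m$, and since $L_{r+1}\in\mathcal W_{r+1}(T)$ was arbitrary, taking the minimum over $\mathcal W_{r+1}(T)$ finishes the proof.
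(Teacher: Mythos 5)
Your first half is sound and matches the paper: splitting the zero-sum subsequences of a length-$(r+1)$ signed subsequence $L$ according to whether they contain a distinguished term, and observing that those avoiding it are counted by $N(L\cdot b^{[-1]})\geq m$ with $L\cdot b^{[-1]}\in\mathcal W_r(T)$. But the second half has a genuine gap. Your proposed correspondence --- delete $\epsilon a$, flip the sign of a single other term $\epsilon'b$, and ``swap $\epsilon a\leftrightarrow(-\epsilon'b)$'' --- does not preserve the zero-sum property: if $W$ is zero-sum and contains $\epsilon a$, then replacing $\epsilon a$ by $-\epsilon'b$ changes the sum by $-\epsilon a-\epsilon'b$, which is $0_G$ only when $\epsilon a=-\epsilon'b$ as group elements, and nothing forces $L$ to contain such a pair. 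You sensed the problem (``this is where care is needed'') but the resolution you sketch never materializes, and no choice of a single auxiliary term can repair it in general.

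The missing idea is to work with a whole nonempty zero-sum subsequence rather than a single flipped term. Since $m>1$, every $L\in\mathcal W_{r+1}(T)$ satisfies $N(L)>1$ and hence contains a nonempty zero-sum subsequence $V$; choose the distinguished term $b$ to be a term of $V$. The paper then compares $L$ with the length-$r$ signed subsequence $(L\cdot V^{[-1]})\cdot(-(V\cdot b^{[-1]}))$, i.e.\ $L$ with $V$ replaced by the negation of $V\cdot b^{[-1]}$. Because $\sigma(V)=0_G$, one has $b=\sigma\bigl(-(V\cdot b^{[-1]})\bigr)$, and this identity is exactly what makes the complement-and-negate map $U\mapsto\widetilde U=b\cdot\bigl(-\bigl([-(V\cdot b^{[-1]})]\cdot U^{[-1]}\bigr)\bigr)$ sum-preserving ($\sigma(\widetilde U)=\sigma(U)$) and injective into the subsequences of $V$ containing $b$. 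This yields an injection from the zero-sum subsequences of the modified length-$r$ sequence into the zero-sum subsequences of $L$ containing $b$, giving the second batch of at least $m$, disjoint from the first by construction. Without replacing the single-term flip by this $V$-based replacement, the bound $N(L)\geq 2m$ is not established.
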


\begin{proof} \ Let $L\in \mathcal{W}_{r+1}(T)$ with
\begin{equation}\label{equation N0(L)reachminimal}
N(L)=\min\{N(L_{r+1}): L_{r+1}\in  \mathcal{W}_{r+1}(T)\}.
\end{equation}
Note that
\begin{equation}\label{equation minNr+1geqminNr}
\min\{N(L_{r+1}): L_{r+1}\in  \mathcal{W}_{r+1}(T)\}\geq  \min\{N(L_{r}): L_{r}\in  \mathcal{W}_{r}(T)\}.
\end{equation}
 Since $\min\{N(L_{r}): L_{r}\in  \mathcal{W}_{r}(T)\}>1$, it follows from $\eqref{equation minNr+1geqminNr}$ that $N(L)>1$ and so
 $L$ contains a nonempty zero-sum subsequence $V$. Fix a term $b\mid V$.
Let
$U$ be an arbitrary subsequence of $-(V\cdot b^{[-1]})$ (note that $U$ is allowed to be the empty sequence).
We define
\begin{equation}\label{equation tildeU}
\widetilde{U}=b\cdot \left(-\left( [-(V\cdot b^{[-1]})]\cdot U^{[-1]} \right)\right).
\end{equation}
Note that
\begin{equation}\label{equation tildeUdividesV}
\widetilde{U}\mid V.
\end{equation}
Since $\sigma(V)=0_G$, it follows that
\begin{equation}\label{equation tildeU=U}
\begin{array}{llll}
\sigma(\widetilde{U})&=& b+\sigma\left(-\left( [-(V\cdot b^{[-1]})]\cdot U^{[-1]} \right)\right) \\
&=& b-\sigma\left([-(V\cdot b^{[-1]})]\cdot U^{[-1]}\right)\\
&=& b-\left(\sigma(-(V\cdot b^{[-1]}))-\sigma(U)\right)\\
&=& b-\sigma(-(V\cdot b^{[-1]}))+\sigma(U)\\
&=& b+\sigma(V\cdot b^{[-1]})+\sigma(U)\\
&=& \sigma(V)+\sigma(U)\\
&=&\sigma(U). \\
\end{array}
\end{equation}
Let
\begin{equation}\label{equation X1}
\mathcal{X}_1=\left\{A: A \mbox{ is a zero-sum subsequence of } (L\cdot V^{[-1]})\cdot (-(V\cdot b^{[-1]}))\right\}
\end{equation}
 and
\begin{equation}\label{equation X2}
 \mathcal{X}_2=\left\{B: B \mbox{ is a zero-sum subsequence of } L \mbox{ with }b\mid B \right\}.
 \end{equation}
By \eqref{equation tildeU}, \eqref{equation tildeUdividesV} and \eqref{equation tildeU=U}, we can check that the following $\theta: \mathcal{X}_1\rightarrow \mathcal{X}_2$ is an injection of $\mathcal{X}_1$ into $\mathcal{X}_2$, (noting that if $U_1, U_2$ are distinct subsequences of $-(V\cdot b^{[-1]})$ then  $\widetilde{U_1}, \widetilde{U_2}$ are distinct subsequences of $V$).
 For any $A\in \mathcal{X}_1$, let $\theta: A \mapsto A_1\cdot \widetilde{A_2}$
where $A=A_1\cdot A_2$ with $A_1=A\cap (L\cdot V^{[-1]})$
and $A_2=A\cap (-(V\cdot b^{[-1]}))$.

Since $(L\cdot V^{[-1]})\cdot (-(V\cdot b^{[-1]}))\in \mathcal{W}_{r}(T)$ and $L\cdot b^{[-1]}\in \mathcal{W}_{r}(T)$ and $\theta$ is injective, it follows from \eqref{equation N0(L)reachminimal}, \eqref{equation X1} and \eqref{equation X2} that
\begin{equation}
\begin{array}{llll}
\min\{N(L_{r+1}): L_{r+1}\in  \mathcal{W}_{r+1}(T)\}&=& N(L) \\
&=& |\mathcal{X}_2|+N(L\cdot b^{[-1]})\\
&\geq & |\mathcal{X}_1|+N(L\cdot b^{[-1]})\\
&=& N\left(L\cdot V^{[-1]})\cdot (-(V\cdot b^{[-1]})\right)+N(L\cdot b^{[-1]})\\
&\geq & 2 \min\{N(L_{r}): L_{r}\in  \mathcal{W}_{r}(T)\},\\
\end{array}
\end{equation}
completing the proof.
\end{proof}

\begin{lemma} \label{lemma case of k leq n} \ Let $n\geq k\geq 1$, and let $T\in \mathcal{F}(C_{k; n})$ be a nonempty sequence. Then $T$ is an idempotent-sum free [resp. idempotent-sum] sequence if and only if $\Psi(T)=\mathop{\bullet}\limits_{a\mid T} ({\rm Ind}(a)+n \mathbb{Z})\in \mathcal{F}(\mathbb{Z}\diagup n \mathbb{Z})$ is a zero-sum free [resp. zero-sum] sequence. In particular,

\begin{displaymath}
N(T;\textbf{e})=\left\{ \begin{array}{ll}
N(\Psi(T))-1 & \textrm{if $k>1$;}\\
N(\Psi(T)) & \textrm{if $k=1$.}\\
\end{array} \right.
\end{displaymath}
\end{lemma}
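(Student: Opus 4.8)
The plan is to reduce the semigroup statement to a statement about $\mathbb{Z}/n\mathbb{Z}$ via the map $\Psi$, using Lemma~\ref{Lemma product condition containing idmepotent} as the translation device between ``idempotent-sum'' in $C_{k;n}$ and ``zero-sum'' in $\mathbb{Z}/n\mathbb{Z}$. The key observation is that when $n\geq k$ we have $\lceil k/n\rceil = 1$, so Lemma~\ref{Lemma product condition containing idmepotent} says that a nonempty $W\mid T$ is idempotent-sum if and only if $\sum_{a\mid W}{\rm Ind}(a)\geq n$ \emph{and} $\sum_{a\mid W}{\rm Ind}(a)\equiv 0\pmod n$. On the other hand, $\sigma(\Psi(W)) = \sum_{a\mid W}{\rm Ind}(a) + n\mathbb{Z}$, so $\Psi(W)$ is zero-sum in $\mathbb{Z}/n\mathbb{Z}$ exactly when $\sum_{a\mid W}{\rm Ind}(a)\equiv 0\pmod n$. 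Since every term of $W$ has ${\rm Ind}(a)\geq 1$, for a \emph{nonempty} $W$ the congruence $\sum_{a\mid W}{\rm Ind}(a)\equiv 0\pmod n$ forces $\sum_{a\mid W}{\rm Ind}(a)\geq n$ automatically. Hence, for nonempty subsequences, ``$W$ idempotent-sum'' $\iff$ ``$\Psi(W)$ zero-sum''.

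From this equivalence the ``if and only if'' part is immediate: $T$ is idempotent-sum free means no nonempty $W\mid T$ is idempotent-sum, which by the equivalence (noting $\Psi$ is a length-preserving, $\mid$-compatible monoid homomorphism, so nonempty subsequences of $T$ correspond bijectively to nonempty subsequences of $\Psi(T)$) is the same as $\Psi(T)$ having no nonempty zero-sum subsequence, i.e. $\Psi(T)$ zero-sum free. For the ``resp.'' version one applies the equivalence to $W = T$ itself (assuming $T$ nonempty), giving $T$ idempotent-sum $\iff \Psi(T)$ zero-sum.

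For the counting formula, I would argue that $N(T;\textbf{e}) = \#\{W\mid T : \sigma(W)=\textbf{e}\}$ and split according to whether $W$ is empty. If $W=\varepsilon$: this is counted in $N(T;\textbf{e})$ precisely when $\sigma(\varepsilon)=\textbf{e}$. But $\varepsilon$ has a sum only if $C_{k;n}$ has an identity, and $C_{k;n}$ has an identity iff $k=1$ (in which case $\textbf{e}=0_{\mathcal{S}}$ is that identity), while for $k>1$ the empty sequence contributes nothing. Meanwhile on the $\mathbb{Z}/n\mathbb{Z}$ side, $\varepsilon$ is always a zero-sum subsequence, contributing $1$ to $N(\Psi(T))$. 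For nonempty $W$: by the equivalence above and the bijection between nonempty subsequences of $T$ and of $\Psi(T)$, the number of nonempty idempotent-sum $W\mid T$ equals the number of nonempty zero-sum subsequences of $\Psi(T)$, which is $N(\Psi(T)) - 1$. Adding the two contributions: if $k>1$, $N(T;\textbf{e}) = 0 + (N(\Psi(T))-1) = N(\Psi(T))-1$; if $k=1$, $N(T;\textbf{e}) = 1 + (N(\Psi(T))-1) = N(\Psi(T))$.

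The only point requiring care — and the main (minor) obstacle — is verifying that $\Psi$ genuinely sets up a bijection between subsequences of $T$ and subsequences of $\Psi(T)$ that is compatible with sums modulo $n$; since $\Psi$ is defined termwise by $a\mapsto {\rm Ind}(a)+n\mathbb{Z}$ and extends multiplicatively to $\mathcal{F}(C_{k;n})\to\mathcal{F}(\mathbb{Z}/n\mathbb{Z})$, a subsequence $W\mid T$ maps to $\Psi(W)\mid\Psi(T)$ with $|\Psi(W)|=|W|$, and conversely every subsequence of $\Psi(T)$ arises this way; one must note this correspondence need not be injective on individual subsequences as \emph{sequences}, but it preserves the count of subsequences of $T$ in the sense of distinct subsequences (indexed subsets of positions), which is exactly what $N(\cdot)$ counts. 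I would state this compatibility explicitly and then the formula follows by the bookkeeping above.
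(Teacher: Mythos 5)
Your proposal is correct and follows essentially the same route as the paper: both use Lemma~\ref{Lemma product condition containing idmepotent} together with the observation that $k\leq n$ makes the lower bound $\lceil k/n\rceil n=n$ automatic for nonempty subsequences with index-sum divisible by $n$, establish the bijection between nonempty idempotent-sum subsequences of $T$ and nonempty zero-sum subsequences of $\Psi(T)$, and then account for the empty sequence separately according to whether $k=1$ or $k>1$. No gaps.
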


 \begin{proof} \ Since $k\leq n$, we see that $\sum\limits_{a\mid W}  {\rm Ind}(a)\equiv 0\pmod n$ implies $\sum\limits_{a\mid W}  {\rm Ind}(a)\geq n=\lceil \frac{k}{n}\rceil n$ for any nonempty sequence $W\in \mathcal{F}(\mathcal{S})$. By Lemma \ref{Lemma product condition containing idmepotent} and the definition of the map $\Psi$, we see that $T$ is an idempotent-sum free [resp. idempotent-sum] sequence if and only if $\Psi(T)\in \mathcal{F}(\mathbb{Z}\diagup n \mathbb{Z})$ is a zero-sum free [resp. zero-sum] sequence, and moreover, we see that
 $\Psi$ is a bijection of the set $$\mathcal{A}=\{ \mbox{nonempty idempotent-sum subsequences of } T\}$$ onto the set $$\mathcal{B}=\{ \mbox{nonempty zero-sum subsequences of } \Psi(T)\},$$ where $\Psi: V\mapsto \Psi(V)$ for any nonempty idempotent-sum subsequence $V$ of $T$  given as \eqref{equation big pasi}.

Suppose $k=1$. Then $C_{k; n}\cong \mathbb{Z}\diagup n \mathbb{Z}$ and the idempotent $\textbf{e}$ is the identity element of $C_{k; n}$. It follows that the empty sequence $\varepsilon$ is both an idempotent-sum subsequence of $T$ and a zero-sum subsequence of $\Psi(T)$.  Then
 $N(T;\textbf{e})=|\mathcal{A}\cup \{\varepsilon\}|=|\mathcal{B}\cup \{\varepsilon\}|=N(\Psi(T))$, done.

Otherwise, $k>1$.  We see that the empty sequence $\varepsilon$ is not an idempotent-sum subsequence of $T$, but an zero-sum subsequence of $\Psi(T)$. Hence, $N(T;\textbf{e})=|\mathcal{A}|=|\mathcal{B}\cup \{\varepsilon\}|-1=N(\Psi(T))-1$, completing the proof of the lemma.
\end{proof}

\begin{theorem}\label{Theorem Number of idempotent-sum subsequences} \
Let $k,n$ be positive integers, and let $T\in \mathcal{F}({\rm C}_{k;  n})$ be a nonempty sequence.  Then $N(T;\textbf{e})\geq 2^{|T|-\left\lceil\frac{k}{n}\right\rceil n+1}-1+\lfloor\frac{1}{k}\rfloor$.  Furthermore, for any integer $\delta>0$ such that $N(T;\textbf{e})<2^{|T|-\lceil \frac{k}{n}\rceil n+1+\delta}-1+\lfloor\frac{1}{k}\rfloor$, then the following hold:

\noindent (i) If $k>n$, then $\mathop{\bullet}\limits_{a\mid T'} {\rm Ind}(a)\in \mathcal{F}(\mathbb{Z})$ is $1$-smooth for any $T'\mid T$ with $|T'|\geq n+\delta-1$;

\noindent (ii) Suppose $k\leq n$ and $\delta\in [1,\lceil\frac{n}{2}\rceil-1]$. Then $T$ contains a subsequence $T'$  of length $n-\delta$
such that
$\mathop{\bullet}\limits_{a\mid T'} ({\rm Ind}(a)+n\mathbb{Z}) \in \mathcal{F}(\mathbb{Z}\diagup n \mathbb{Z})$ is a signed $g$-smooth sequence for some generator $g$ of $\mathbb{Z}\diagup n \mathbb{Z}$.
\end{theorem}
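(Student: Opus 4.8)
The strategy is to split the argument into the two regimes $k>n$ and $k\le n$, reducing each to a statement about sequences over the cyclic group $\mathbb{Z}/n\mathbb{Z}$ or about sequences of positive integers, and then to apply the structural lemmas established above.

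First I would handle the \emph{lower bound} $N(T;\mathbf e)\ge 2^{|T|-\lceil k/n\rceil n+1}-1+\lfloor 1/k\rfloor$. When $k=1$ we have $\lceil k/n\rceil n = n$, $\lfloor 1/k\rfloor=1$, and by Lemma~\ref{lemma case of k leq n} $N(T;\mathbf e)=N(\Psi(T))\ge 2^{|T|-n+1}$ by Lemma~\ref{Lemma Olson}; this is exactly the claimed bound. When $k>1$ but $k\le n$, again $\lceil k/n\rceil n=n$ and $\lfloor 1/k\rfloor=0$, and Lemma~\ref{lemma case of k leq n} gives $N(T;\mathbf e)=N(\Psi(T))-1\ge 2^{|T|-n+1}-1$. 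When $k>n$, one works directly from Lemma~\ref{Lemma product condition containing idmepotent}: a subsequence $W\mid T$ is idempotent-sum iff $\sigma(\Psi'(W))\ge\lceil k/n\rceil n$ and $\equiv 0\pmod n$, where $\Psi'(W)=\mathop{\bullet}_{a\mid W}{\rm Ind}(a)\in\mathcal F(\mathbb{Z})$; applying Lemma~\ref{Lemma IntegersAddition} with $t=\lceil k/n\rceil$ to the sequence $\mathop{\bullet}_{a\mid T}{\rm Ind}(a)$ yields at least $2^{|T|-\lceil k/n\rceil n+1}-1$ such subsequences, and $\lfloor 1/k\rfloor=0$ here, so the bound holds.

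For part (i) ($k>n$), suppose $N(T;\mathbf e)<2^{|T|-\lceil k/n\rceil n+1+\delta}-1$ and let $T'\mid T$ with $|T'|\ge n+\delta-1$. Write $H=\mathop{\bullet}_{a\mid T'}{\rm Ind}(a)$, a sequence of positive integers of length $|H|=|T'|$. Assume for contradiction that $H$ is not $1$-smooth. Since $|H|\ge n+\delta-1\ge n$, I would apply Lemma~\ref{Lemma instant} to get a nonempty $K\mid H$ with $\sigma(K)\equiv 0\pmod n$ and $\sigma(K)\ge 2|K|\ge 2(|T'|-n+1)$. Crucially $\sigma(K)\ge 2(|T'|-n+1)\ge 2\delta$, hence $\sigma(K)\ge\lceil k/n\rceil n$ once $|T'|$ is large enough — and here I must be careful: I want to produce \emph{many} idempotent-sum subsequences, not just one. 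The right move is to combine $K$ with arbitrary subsequences: any subsequence $W$ of $T'$ containing a fixed large-sum, $n$-divisible sub-block, plus a suitable completion, stays idempotent-sum. More precisely, using Lemma~\ref{Lemma CombinatoricEquation} applied to the complement $H\cdot K^{[-1]}$ together with the fact that $\sigma(K)$ already exceeds $\lceil k/n\rceil n$, one counts at least $2^{|T|-\lceil k/n\rceil n+1+\delta}-1$ idempotent-sum subsequences of $T$, contradicting the hypothesis. Running the counting bookkeeping so the exponent comes out to exactly $|T|-\lceil k/n\rceil n+1+\delta$ is the delicate part, and I expect this to be the main obstacle: one needs the non-$1$-smoothness to ``buy'' a factor of $2^{\delta}$ over the generic bound, which is precisely the content of the quantitative gain in Lemma~\ref{Lemma behaving for integers} / Lemma~\ref{Lemma instant}.

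For part (ii) ($k\le n$, $\delta\in[1,\lceil n/2\rceil-1]$), by Lemma~\ref{lemma case of k leq n} the hypothesis $N(T;\mathbf e)<2^{|T|-n+1+\delta}-1+\lfloor 1/k\rfloor$ translates, in both cases $k=1$ and $k>1$, into $N(\Psi(T))<2^{|\Psi(T)|-n+1+\delta}$ for the sequence $\Psi(T)\in\mathcal F(\mathbb{Z}/n\mathbb{Z})$. Now I invoke Lemma~\ref{Lemma containsasmootheshortsequence}: set $m=\min\{N(L_r):L_r\in\mathcal W_r(\Psi(T))\}$ as a function of $r$. For $r=|\Psi(T)|$ we have $N(\Psi(T))<2^{|\Psi(T)|-n+1+\delta}$, and each time we can drop $r$ by $1$ while keeping the minimum $>1$, the minimum halves by Lemma~\ref{Lemma containsasmootheshortsequence}. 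Descending from $r=|\Psi(T)|$ down to $r=n-\delta$ — that is $|\Psi(T)|-(n-\delta)$ steps — would force $\min\{N(L_{n-\delta})\}\le 2^{|\Psi(T)|-n+1+\delta}/2^{|\Psi(T)|-(n-\delta)}=2$, hence there is a signed subsequence $L$ of $\Psi(T)$ of length $n-\delta$ with $N(L)\le 2$, i.e. $L$ has at most one nonempty zero-sum subsequence. Since $|L|=n-\delta\ge \lfloor n/2\rfloor+1$ by the range of $\delta$, either $L$ is zero-sum free, or $L$ has exactly one nonempty zero-sum subsequence; in the first case Lemma~\ref{SachenChen} gives that $L$ is $g$-smooth for some generator $g$, hence $\Psi(T)$ has a signed $g$-smooth subsequence of length $n-\delta$; in the second case, removing the (necessarily short, by Lemma~\ref{SachenChen}-type length considerations) zero-sum block and re-signing still leaves a zero-sum free signed subsequence of length $\ge\lfloor n/2\rfloor+1$ — or one argues directly that the presence of a single zero-sum subsequence in a length-$(n-\delta)$ signed sequence is incompatible with $N(L)\le 2$ unless $L$ itself is already controlled — and one again extracts the $g$-smooth structure. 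The care needed to run the ``descent'' argument cleanly when the minimum could drop to $1$ before reaching $r=n-\delta$ (in which case one stops earlier and gets an even stronger conclusion, a zero-sum free signed subsequence of length $>n-\delta$, which still yields the claim via Lemma~\ref{SachenChen}) is the secondary obstacle in this part.
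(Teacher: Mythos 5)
Your overall architecture matches the paper's: the lower bound via Lemma \ref{Lemma IntegersAddition}, part (i) via Lemma \ref{Lemma instant} plus a counting argument, and part (ii) via the doubling Lemma \ref{Lemma containsasmootheshortsequence} combined with Lemma \ref{SachenChen}. Part (ii) is essentially correct, and in fact simpler than you make it: since $\min\{N(L_r):L_r\in\mathcal{W}_r(\Psi(T))\}$ is non-decreasing in $r$, assuming $\min\{N(L_{n-\delta})\}\geq 2$ lets you apply Lemma \ref{Lemma containsasmootheshortsequence} at \emph{every} level from $n-\delta$ up to $|T|$, giving $N(\Psi(T))\geq 2^{|T|-(n-\delta)+1}=2^{|T|-n+\delta+1}$, which contradicts the (strict) hypothesis outright. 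So the minimizing $L\in\mathcal{W}_{n-\delta}(\Psi(T))$ satisfies $N(L)=1$, i.e.\ is zero-sum free, and Lemma \ref{SachenChen} finishes. Your ``second case'' ($L$ with exactly one nonempty zero-sum subsequence) never occurs --- it is an artifact of replacing the strict inequality by a non-strict one --- and your attempted treatment of it is the one genuinely hand-wavy spot in that part.

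The real gap is in part (i). The counting you defer as ``the delicate part'' is the heart of the proof, and the mechanism you describe --- combining $K$ with subsequences of the complement $H\cdot K^{[-1]}$ \emph{inside $T'$} --- cannot produce the exponent $|T|-\lceil k/n\rceil n+1+\delta$: since $|T'|$ may be as small as $n+\delta-1$, any count confined to $T'$ is of order $2^{|T'|-\cdots}$, far below $2^{|T|-\cdots}$. The correct move (the paper's Assertion A) is to adjoin $K$ to every nonempty subsequence $L$ of $T\cdot K^{[-1]}$ (complement in all of $T$) with $\sum_{a\mid L}\mathrm{Ind}(a)\equiv 0\pmod n$ and $\sum_{a\mid L}\mathrm{Ind}(a)\geq\lceil \frac{k}{n}\rceil n-\sum_{a\mid K}\mathrm{Ind}(a)$; Lemma \ref{Lemma IntegersAddition} gives at least $2^{|T|-|K|-\lceil \frac{k}{n}\rceil n+\sum_{a\mid K}\mathrm{Ind}(a)+1}-1$ such $L$, and the two inequalities $\sum_{a\mid K}\mathrm{Ind}(a)\geq 2|K|$ and $\sum_{a\mid K}\mathrm{Ind}(a)\geq 2(|T'|-n+1)\geq 2\delta$ from Lemma \ref{Lemma instant} turn the exponent into $|T|-\lceil \frac{k}{n}\rceil n+\delta+1$, the desired contradiction. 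Note also that your parenthetical claim that $\sigma(K)$ ``already exceeds $\lceil \frac{k}{n}\rceil n$'' is neither justified nor needed: Lemma \ref{Lemma instant} only guarantees $\sigma(K)\geq n$ and $\sigma(K)\geq 2\delta$, and it is the subsequences $L$ of the complement that supply the missing multiple of $n$ needed to reach $\lceil \frac{k}{n}\rceil n$.
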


\begin{proof}  By Lemma \ref{Lemma IntegersAddition}, we have that $T$ contains at least $2^{|T|-\left\lceil\frac{k}{n}\right\rceil n+1}-1$ distinct {\sl nonempty} subsequences $W$ such that $\sum\limits_{a\mid W}{\rm Ind}(a)\geq \left\lceil\frac{k}{n}\right\rceil n$ and $\sum\limits_{a\mid W}{\rm Ind}(a)\equiv 0\pmod{n}$. Moreove, if $k=1$, then ${\rm C}_{k;  n}$ reduces to the cyclic group $\mathbb{Z}\diagup n \mathbb{Z}$ and so the empty sequence $\varepsilon$ is an idempotent-sum (zero-sum) subsequence of $T$.  Combined with Lemma \ref{Lemma product condition containing idmepotent}, we have that $N(T;\textbf{e})\geq 2^{|T|-\left\lceil\frac{k}{n}\right\rceil n+1}-1+\lfloor\frac{1}{k}\rfloor$.

\medskip

\noindent {\sl Proof of (i).} \ Note that \begin{equation}\label{equation |T|geq kovernn+n}
|T|\geq \left\lceil\frac{k}{n}\right\rceil n -\delta,
\end{equation}
 for otherwise $N(T;\textbf{e})<2^{|T|-\lceil \frac{k}{n}\rceil n+1+\delta}-1\leq 0$, which is absurd.

\noindent \textbf{Assertion A. } \ The sequence $T$ contains no nonempty subsequence $W$ such that
 $\sum\limits_{a\mid W} {\rm Ind}(a)\geq 2 \delta,$ $\sum\limits_{a\mid W} {\rm Ind}(a)\equiv 0\pmod n$ and $\sum\limits_{a\mid W} {\rm Ind}(a)\geq  2|W|$.

 {\sl Proof of Assertion A.} \ Assume to the contrary that $T$ contains a subsequence $W$ with the given property above.  Let $L_1,\ldots, L_{m}$ be all the distinct nonempty subsequences of $T\cdot W^{[-1]}$ such that
\begin{equation}\label{equation sumLiequivalent0modn}
\sum\limits_{a\mid L_i} {\rm Ind}(a)\equiv 0\pmod n
 \end{equation}
 and
 \begin{equation}\label{equation sumLiequivalent0geqn}
  \sum\limits_{a\mid L_i} {\rm Ind}(a)\geq \left\lceil\frac{k}{n}\right\rceil n- \sum\limits_{a\mid W} {\rm Ind}(a)
   \end{equation}
where $i\in [1,m]$.
Since $\sum\limits_{a\mid W} {\rm Ind}(a)\equiv 0\pmod n$, it follows from
\eqref{equation sumLiequivalent0modn} and \eqref{equation sumLiequivalent0geqn} that $\sum\limits_{a\mid W\cdot L_i} {\rm Ind}(a)\geq \left\lceil \frac{k}{n}\right\rceil n$ and $\sum\limits_{a\mid W\cdot L_i} {\rm Ind}(a)\equiv 0\pmod n$ for each $i\in [1,m]$. Combined with Lemma \ref{Lemma product condition containing idmepotent}, we have that $(W\cdot L_1), \ldots, (W\cdot L_m)$ are $m$ distinct nonempty idempotent-sum subsequences of $T$. Since $\sum\limits_{a\mid W} {\rm Ind}(a)\geq 2 \delta$ and $\sum\limits_{a\mid W} {\rm Ind}(a)\geq  2|W|$, it follows from
Lemma \ref{Lemma product condition containing idmepotent} and Lemma \ref{Lemma IntegersAddition} that
$$\begin{array}{llll}
 N(T; \textbf{e}) &\geq& m \\
 &\geq& -1+2^{|T\cdot W^{[-1]}|-\left(\left\lceil\frac{k}{n}\right\rceil n- \sum\limits_{a\mid W} {\rm Ind}(a) \right)+1}\\
&=& -1+2^{|T|-\left\lceil\frac{k}{n}\right\rceil n+\left(\sum\limits_{a\mid W} {\rm Ind}(a)-|W|\right)+1} \\
&\geq& -1+2^{|T|-\left\lceil\frac{k}{n}\right\rceil n+\frac{\sum\limits_{a\mid W} {\rm Ind}(a)}{2}+1} \\
&\geq& -1+2^{|T|-\left\lceil \frac{k}{n}\right\rceil n+\delta+1}, \\
\end{array}$$
a contradiction. This proves Assertion A. \qed

 To prove conclusion (i), we suppose to the contrary that there exists some subsequence $T'\mid T$ with $|T'|\geq n+\delta-1$ such that $\mathop{\bullet}\limits_{a\mid T'} {\rm Ind}(a)$ is not $1$-smooth.
By Lemma \ref{Lemma instant}, there exists a nonempty subsequence $W\mid T'$ such that $\sum\limits_{a\mid W} {\rm Ind}(a)\equiv 0\pmod n,$ $\sum\limits_{a\mid W} {\rm Ind}(a)\geq  2|W|$ and
$\sum\limits_{a\mid W} {\rm Ind}(a)\geq 2(|T'|-n+1)\geq 2 \delta,$
a contradiction with Assertion A. This proves Conclusion (i).

\noindent {\sl Proof of (ii).} \
 Let
\begin{equation}\label{equation HinWn-delta}
H \in \mathcal{W}_{n-\delta}(\Psi(T))
\end{equation}
 with $$N(H)=\min\left\{N(L_{n-\delta}): L_{n-\delta}\in  \mathcal{W}_{n-\delta}(\Psi(T))\right\}.$$

Suppose that $H\in \mathcal{F}(\mathbb{Z}\diagup n \mathbb{Z})$ is not $g$-smooth for every generator $g$ of $\mathbb{Z}\diagup n \mathbb{Z}$. Since $|H|=n-\delta\geq n-(\lceil\frac{n}{2}\rceil-1)=\lfloor\frac{n}{2}\rfloor+1,$
it follows from Lemma \ref{SachenChen} (i) that $H$ is not zero-sum free (note that if $n=2$ then $|H|\geq  \lfloor\frac{n}{2}\rfloor+1=2={\rm D}(\mathbb{Z}\diagup n \mathbb{Z}$)), and so
$N(H)\geq 2$, equivalently,
\begin{equation}\label{equation minN0Lgeq2}
\min\left\{N(L_{n-\delta}): L_{n-\delta}\in  \mathcal{W}_{n-\delta}(\Psi(T))\right\}\geq 2.
\end{equation}
By \eqref{equation minN0Lgeq2} and applying Lemma \ref{Lemma containsasmootheshortsequence} repeatedly, we derive that
$$\begin{array}{llll}
N(\Psi(T))
&\geq&\min\left\{N(L_{|T|}): L_{|T|}\in  \mathcal{W}_{|T|}(\Psi(T))\right\}\\
&\geq& 2\times \min\left\{N(L_{|T|-1}): L_{|T|-1}\in  \mathcal{W}_{|T|-1}(\Psi(T))\right\}\\
&\vdots&\\
&\geq& 2^{|T|-(n-\delta)} \times \min\left\{N(L_{n-\delta}): L_{n-\delta}\in  \mathcal{W}_{n-\delta}(\Psi(T))\right\} \\
&\geq& 2^{|T|-(n-\delta)+1}. \\
\end{array}$$
It follows from Lemma \ref{lemma case of k leq n} that $N(T; \textbf{e})= N(\Psi(T))-1+\lfloor\frac{1}{k}\rfloor\geq  2^{|T|-(n-\delta)+1}-1+\lfloor\frac{1}{k}\rfloor=2^{|T|-\lceil \frac{k}{n}\rceil n+\delta+1}-1+\lfloor\frac{1}{k}\rfloor$, a contradiction. Hence, $H$ is a $g$-smooth sequence  for some generator $g$ of $\mathbb{Z}\diagup n \mathbb{Z}$.
By \eqref{equation HinWn-delta} and the definition for signed smooth sequences, we have (ii) proved.  This completes the proof of the theorem. \end{proof}

\begin{remark}\label{remark one} It is worth remarking that in Theorem \ref{Theorem Number of idempotent-sum subsequences}, the value $\lceil \frac{k}{n}\rceil n$ appearing in the exponent for the bound of $N(T;\textbf{e})$ is precisely equal to the Erd\H{o}s-Burgess constant of the semigroup ${\rm C}_{k;  n}$, denoted ${\rm I}({\rm C}_{k;  n})$,  which is defined as the least integer $\ell$ such that every sequence $L$ of length $\ell$ over the cyclic semigroup ${\rm C}_{k;  n}$ must contain a nonempty idempotent-sum subsequence.  If $k=1$ the  semigroup ${\rm C}_{k;  n}$ reduces to the cyclic group $\mathbb{Z}/n\mathbb{Z}$, and the Erd\H{o}s-Burgess constant ${\rm I}({\rm C}_{k;  n})=\lceil \frac{k}{n}\rceil n=n$ is precisely the Davenport constant ${\rm D}(\mathbb{Z}/n\mathbb{Z})$ of the cyclic group ${\rm D}(\mathbb{Z}/n\mathbb{Z})$ (see \cite{Wangstructureincyclic,Wangcommu} for example). Hence, the bound $N(T;\textbf{e})\geq 2^{|T|-\left\lceil\frac{k}{n}\right\rceil n+1}-1$ can be written as $N(T;\textbf{e})\geq 2^{|T|-{\rm I}({\rm C}_{k;  n})+1}-1$, which coincide with the case of group, $N(T)\geq  2^{|T|-{\rm D}(\mathbb{Z}/n\mathbb{Z})+1}$. For the connection between enumerations of subsequences and the corresponding zero-sum invariant, we shall discuss more in the final section.
\end{remark}

By Observation A, we can show that any $g$-smooth sequence $T$ of length $\ell$ ($\ell\geq 2$) must contain a subsequence of length $\ell$ which is also $g$-smooth (precisely, $(n_1 g)\cdot \ldots\cdot (n_{\ell-1} g)$ is a $g$-smooth of $T$ given as in Observation A), and therefore, any signed $g$-smooth sequence of length $\ell$ ($\ell\geq 2$) must contain a subsequence of length $\ell-1$ which is also signed $g$-smooth.
From this point of view, the result in Theorem \ref{Theorem Number of idempotent-sum subsequences} (ii) is best possible, i.e., the length $n-\delta$ can not be improved. The reason is as follows. For any $\delta\in [1,\lceil\frac{n}{2}\rceil-1]$, taking a sequence $T$ over the cyclic semigroup ${\rm C}_{k;  n}$ such that $\Psi(T)=(1+n\mathbb{Z})^{[n-\delta]}\cdot (0+n\mathbb{Z})^{[|T|-n+\delta]}$, by Lemma \ref{lemma case of k leq n} we can check that $N(T; \textbf{e})=N(\Psi(T))-1+\lfloor\frac{1}{k}\rfloor
=2^{|T|-n+\delta}-1+\lfloor\frac{1}{k}\rfloor
<2^{|T|-\lceil \frac{k}{n}\rceil n+1+\delta}-1+\lfloor\frac{1}{k}\rfloor$, and that $n-\delta$ is the largest length of signed smooth subsequence of $\Psi(T)$.

\begin{remark}\label{Remark} The restriction $\delta\leq \lceil\frac{n}{2}\rceil-1$ can not be improved for general $n$, which can be seen in Example \ref{Example 1}.
\end{remark}

By Theorem \ref{Theorem Number of idempotent-sum subsequences}, we can determine the structure of the sequence $T$ more precisely in case $k>n$ and
$N(T;\textbf{e})$ is not large given as follows.

\begin{prop} \label{prop 1}\
Let $k,n$ be positive integers with $k>n$, and let $T\in \mathcal{F}({\rm C}_{k;  n})$ be a nonempty sequence. For any $\delta\in [1, \left\lceil\frac{\lceil \frac{k}{n}\rceil n}{2}\right\rceil-\frac{3n}{2}]$, if $N(T;\textbf{e})<2^{|T|-\lceil \frac{k}{n}\rceil n+1+\delta}-1$, then $$\mathop{\bullet}\limits_{a\mid T} {\rm Ind}(a)=1^{[|T|-u]}\cdot x_1\cdot \ldots \cdot x_{u}$$ where $x_1,\ldots,x_u\geq 2$ and $u\leq \sum\limits_{i=1}^u (x_i-1)\leq \delta-1.$
\end{prop}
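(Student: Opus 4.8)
The plan is to deduce Proposition~\ref{prop 1} from Theorem~\ref{Theorem Number of idempotent-sum subsequences}~(i) by a sorting-and-counting argument on the sequence $\mathop{\bullet}\limits_{a\mid T}{\rm Ind}(a)$ of positive integers. First I would check that the hypothesis $\delta\le\lceil\tfrac{\lceil k/n\rceil n}{2}\rceil-\tfrac{3n}{2}$ forces $|T|$ to be large; indeed, the bound $N(T;\mathbf e)<2^{|T|-\lceil k/n\rceil n+1+\delta}-1$ together with $N(T;\mathbf e)\ge 0$ gives $|T|\ge\lceil k/n\rceil n-\delta$, which (using $\delta<\lceil k/n\rceil n/2$) is comfortably larger than $n+\delta-1$. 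Hence Theorem~\ref{Theorem Number of idempotent-sum subsequences}~(i) applies not only to $T$ itself but to \emph{every} subsequence $T'\mid T$ with $|T'|\ge n+\delta-1$: each such $\mathop{\bullet}\limits_{a\mid T'}{\rm Ind}(a)$ is $1$-smooth.

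Next I would write $\mathop{\bullet}\limits_{a\mid T}{\rm Ind}(a)=1^{[|T|-u]}\cdot x_1\cdot\ldots\cdot x_u$ with $2\le x_1\le\cdots\le x_u$, i.e. $u$ is the number of terms exceeding $1$, and argue $u\le\delta-1$. Suppose $u\ge\delta$. Then I would extract a subsequence $T'$ by keeping the $\delta$ largest ``big'' terms $x_{u-\delta+1},\ldots,x_u$ together with exactly $n-1$ of the $1$'s (this is possible since $|T|-u\ge|T|-(\delta-1)-\cdots$; more carefully, $|T|\ge\lceil k/n\rceil n-\delta\ge 3n/2+\delta\ge n-1+\delta\ge n-1+u$ forces at least $n-1$ ones to be present once $u\le$ something — I must be a little careful here and may instead just take $T'$ of length exactly $n+\delta-1$ containing all of $x_{u-\delta+1},\ldots,x_u$ and filling the rest with $1$'s). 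By Observation~A, a $1$-smooth sequence of positive integers with $\delta$ terms each $\ge 2$ and only $n-1$ ones cannot be $1$-smooth when the big terms are too large: the condition $n_t\le 1+\sum_{i<t}n_i$ fails. So the right move is to choose $T'$ to be a minimal ``bad'' configuration and get a contradiction with Theorem~(i). I would similarly handle the inequality $\sum_{i=1}^u(x_i-1)\le\delta-1$: if $\sum(x_i-1)\ge\delta$, take $T'$ consisting of all the big terms plus enough $1$'s to reach length $\ge n+\delta-1$, and observe that $\sigma(T')=|T'|+\sum(x_i-1)\ge |T'|+\delta$, while a $1$-smooth sequence $H$ satisfies $\sigma(H)\le 2^{|H|}-1$ only loosely — better, use that $T'$ being $1$-smooth forces (via Lemma~\ref{Lemma last lemma} or directly Observation~A~(iii)) that $x_u\le 1+(\text{sum of the rest})$, iterate downward, and conclude $\sum(x_i-1)\le$ (number of $1$'s available beyond what the smoothness recursion already consumes) which is at most $\delta-1$ given $|T'|\le$ the length forced by the $N$-bound.

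The cleanest route, which I would actually carry out, is this. Choose $T'\mid T$ with $|T'|=n+\delta-1$ and ${\rm v}_{>1}(T')$ (the number of terms exceeding $1$) as large as possible; since $T$ has $|T|\ge\lceil k/n\rceil n-\delta\ge n+\delta-1$ terms, such $T'$ exists, and $T'$ contains \emph{all} big terms of $T$ provided $u\le\delta-1$ — so I argue by contradiction. If $u\ge\delta$, pick $T'$ to contain the $\delta$ largest big terms and $n-1$ ones; by Theorem~(i), $\mathop{\bullet}\limits_{a\mid T'}{\rm Ind}(a)$ is $1$-smooth, so by Observation~A the sorted sequence $1^{[n-1]}\cdot x_{u-\delta+1}\cdot\ldots\cdot x_u$ satisfies $x_u\le 1+(n-1)+\sum_{i=u-\delta+1}^{u-1}x_i$; feeding this back, smoothness of the whole of $\mathop{\bullet}\limits_{a\mid T}{\rm Ind}(a)$ restricted to $T'$ at stage $t$ reads $x_t\le 1+(n-1)+\sum_{i=u-\delta+1}^{t-1}x_i$, but at $t=u-\delta+1$ (the first big term) this says $x_{u-\delta+1}\le n$, and telescoping the constraints $\sum_{i}(x_i-1)\le (n-1)$ only over the $\delta$ retained big terms, combined with $\delta\ge 1$, contradicts nothing by itself — so the contradiction must come from pairing this with the counting bound. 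Precisely: if $\sum_{a\mid T}({\rm Ind}(a)-1)\ge\delta$, then selecting $T'$ of length $n+\delta-1$ capturing these excesses gives $\sigma(\mathop{\bullet}\limits_{a\mid T'}{\rm Ind}(a))\ge (n+\delta-1)+\delta$, and Lemma~\ref{Lemma behaving for integers}-type reasoning shows a sequence with this sum and this length cannot be $1$-smooth when $\delta\le\lceil\tfrac{\lceil k/n\rceil n}{2}\rceil-\tfrac{3n}{2}$ — contradicting Theorem~(i). The final bookkeeping is that $u\le\sum(x_i-1)$ is trivial since each $x_i-1\ge 1$.

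The main obstacle I anticipate is getting the length of the extracted subsequence $T'$ exactly right so that simultaneously (a) $|T'|\ge n+\delta-1$ so Theorem~(i) forces $1$-smoothness, (b) $T'$ retains all the ``excess'' $\sum(x_i-1)$ I want to bound, and (c) the arithmetic constraint $\lceil\tfrac{\lceil k/n\rceil n}{2}\rceil-\tfrac{3n}{2}$ on $\delta$ is exactly what is needed to ensure $|T|$ is large enough to do the extraction while keeping $\sigma(T')$ incompatible with smoothness — this is where the peculiar upper bound on $\delta$ must be used, and matching it precisely will require care. I would expect the role of the $\tfrac{3n}{2}$ term to be: $n$ of it absorbs the $n-1$ mandatory $1$'s needed to push $|T'|$ past the smoothness threshold in the group-free setting $k>n$, and the extra $\tfrac n2$ gives the slack (analogous to Lemma~\ref{Lemma behaving for integers}'s factor $2$) separating a $1$-smooth sequence's sum from that of a non-smooth one.
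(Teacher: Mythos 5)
You correctly reduce the first step to Theorem \ref{Theorem Number of idempotent-sum subsequences}~(i): since $|T|\geq\lceil\frac{k}{n}\rceil n-\delta\geq n+\delta-1$, every subsequence of length $n+\delta-1$ is $1$-smooth under ${\rm Ind}$, which gives the shape $1^{[|T|-u]}\cdot x_1\cdot\ldots\cdot x_u$ (with, at this stage, only the weak bound $u\leq n+\delta-2$). The genuine gap is in your argument for the key inequality $\sum_{i=1}^u(x_i-1)\leq\delta-1$. Your proposed mechanism is that a subsequence $T'$ of length $n+\delta-1$ with $\sigma(\mathop{\bullet}\limits_{a\mid T'}{\rm Ind}(a))\geq|T'|+\delta$ ``cannot be $1$-smooth'' by ``Lemma \ref{Lemma behaving for integers}-type reasoning.'' This uses Lemma \ref{Lemma behaving for integers} in the wrong direction: that lemma says \emph{not $1$-smooth} implies $\sigma\geq 2|T'|$, not that a large sum implies non-smoothness. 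Indeed $1^{[n-1]}\cdot 2^{[\delta]}$ has length $n+\delta-1$ and sum $(n+\delta-1)+\delta$ and \emph{is} $1$-smooth, and $1\cdot 2\cdot 4\cdot 8\cdots$ shows a $1$-smooth integer sequence can even have sum exponential in its length. So no structural contradiction with Theorem~(i) is available once $\sum(x_i-1)\geq\delta$: for instance, a sequence with ${\rm Ind}$-image $1^{[|T|-1]}\cdot(\delta+1)$ has every length-$(n+\delta-1)$ subsequence $1$-smooth, yet has $\sum(x_i-1)=\delta$. You half-acknowledge this (``contradicts nothing by itself'') but never supply the missing step.

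What is actually needed --- and what the paper does --- is to return to the hypothesis $N(T;\textbf{e})<2^{|T|-\lceil\frac{k}{n}\rceil n+1+\delta}-1$ with a second, direct enumeration. Writing $\sum_{i=1}^u x_i=bn+y$ with $1\leq y\leq n$, fix $H\mid T$ whose ${\rm Ind}$-image is $1^{[n-y]}\cdot x_1\cdot\ldots\cdot x_u$, so that $\sum_{a\mid H}{\rm Ind}(a)=(b+1)n$; appending to $H$ any $tn$ of the remaining $|T|-u-(n-y)$ ones with $t+b+1\geq\lceil\frac{k}{n}\rceil$ produces an idempotent-sum subsequence. Hence $N(T;\textbf{e})\geq\sum_{t}\binom{|T|-u-(n-y)}{tn}$, which by Lemma \ref{Lemma CombinatoricEquation} is at least $2^{|T|-u-(n-y)-(\lceil\frac{k}{n}\rceil-(b+1))n+1}-1$ once the case $b+1\geq\lceil\frac{k}{n}\rceil$ is excluded. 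Comparing exponents with the hypothesis yields exactly $bn+y-u=\sum_{i=1}^u(x_i-1)\leq\delta-1$ (and $u\leq\sum(x_i-1)$ is indeed trivial, as you say). The bound $\delta\leq\left\lceil\frac{\lceil\frac{k}{n}\rceil n}{2}\right\rceil-\frac{3n}{2}$ is used to guarantee $|T|-u-(n-y)>0$ and to rule out $b+1\geq\lceil\frac{k}{n}\rceil$ --- not, as you guessed, to separate the sums of smooth and non-smooth sequences.
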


\begin{proof}

By Theorem \ref{Theorem Number of idempotent-sum subsequences} (i) and the definition for $1$-smooth sequences, we have immediately that
\begin{equation}\label{equation Ind(T)=several1xi}
\mathop{\bullet}\limits_{a\mid T} {\rm Ind}(a)=1^{[|T|-u]}\cdot x_1\cdot \ldots \cdot x_{u}
 \end{equation}
 where
 \begin{equation}\label{equation allxigeq2}
 x_1,\ldots,x_u\geq 2
  \end{equation}
  and
 \begin{equation}\label{equation u in [0,?-1]}
 0\leq u\leq n+\delta-2.
 \end{equation}

Let
\begin{equation}\label{equation sumxi=bn+y}
\sum\limits_{i=1}^u x_i=bn +y \mbox{ with } 1\leq y\leq n.
\end{equation}
Note that \begin{equation}\label{equation new}
|T|\geq \left\lceil\frac{k}{n}\right\rceil n -\delta
\end{equation} for the same argument
given as in the proof of \ref{Theorem Number of idempotent-sum subsequences} (i).
Since $\delta\leq \left\lceil\frac{\lceil \frac{k}{n}\rceil n}{2}\right\rceil-\frac{3n}{2}$, it follows from
 \eqref{equation u in [0,?-1]} and \eqref{equation new} that
$$\begin{array}{llll}
|T|-u-(n-y) &\geq& (\left\lceil\frac{k}{n}\right\rceil n -\delta)- (n+\delta-2)-(n-y)\\
 &=& \left\lceil\frac{k}{n}\right\rceil n-2n-2\delta+y+2\\
&\geq& \left\lceil\frac{k}{n}\right\rceil n-2n-2(\left\lceil\frac{\lceil \frac{k}{n}\rceil n}{2}\right\rceil-\frac{3n}{2})+y+2 \\
&=& \left\lceil\frac{k}{n}\right\rceil n-2\left\lceil\frac{\lceil \frac{k}{n}\rceil n}{2}\right\rceil+n+y+2 \\
&\geq& \left\lceil\frac{k}{n}\right\rceil n-(\lceil \frac{k}{n}\rceil n+1)+n+y+2\\
&=& n+y+1>0.\\
\end{array}$$
Fix a subsequence $H\mid T$ with $\mathop{\bullet}\limits_{a\mid H} {\rm Ind}(a)=1^{[n-y]}\cdot x_1\cdot \ldots \cdot x_{u}$. By \eqref{equation Ind(T)=several1xi}, we have that
 \begin{equation}\label{equation whatTH-1looklike}
 \mathop{\bullet}\limits_{a\mid T\cdot  H^{[-1]}} {\rm Ind}(a)=1^{|T|-u-(n-y)}.
 \end{equation}
Since $\sum\limits_{a\mid H} {\rm Ind}(a)=(b+1)n$, it follows from Lemma \ref{Lemma product condition containing idmepotent} that $L\cdot H$ is an idempotent-sum subsequence of $T$ for any $L\mid (T\cdot  H^{[-1]})$ with $\mathop{\bullet}\limits_{a\mid L}{\rm Ind}(a)\equiv 0\pmod n$ and $(b+1)n+\sum\limits_{a\mid L}{\rm Ind}(a)\geq \lceil \frac{k}{n}\rceil n$.  Then it follows from \eqref{equation whatTH-1looklike} that
 \begin{equation}\label{equation generalN(T)geq}
 N(T; \textbf{e})\geq \sum\limits_{\stackrel{t\geq 0 ~{\rm with} }{ t+b+1\geq \left\lceil \frac{k}{n}\right\rceil}}{|T|-u-(n-y) \choose t n}.
 \end{equation}

 Suppose that $b+1\geq \left\lceil \frac{k}{n}\right\rceil$. By \eqref{equation generalN(T)geq} and Lemma \ref{Lemma CombinatoricEquation}, we have that
 $$\begin{array}{llll}
N(T; \textbf{e})&\geq& \sum\limits_{t\geq 0}{|T|-u-(n-y) \choose t n} \\
&=& 1+\sum\limits_{t\geq 1}{|T|-u-(n-y) \choose t n} \\
&\geq &  1+2^{|T|-u-(n-y)-n+1}-1\\
&=&  2^{|T|-u-(n-y)-n+1}.\\
\end{array}$$
Since $N(T; \textbf{e})<2^{|T|-\left\lceil \frac{k}{n}\right\rceil n+1+\delta}-1$, it follows that $|T|-u-(n-y)-n+1<|T|-\left\lceil \frac{k}{n}\right\rceil n+1+\delta$,
which implies that $\left\lceil \frac{k}{n}\right\rceil n-2n+y+1-u\leq \delta$. By \eqref{equation u in [0,?-1]} and \eqref{equation sumxi=bn+y}, we derive that $2\delta\geq (\left\lceil \frac{k}{n}\right\rceil n-2n+y+1-u)+(u-n+2)=\left\lceil \frac{k}{n}\right\rceil n-3n+3+y
\geq\left\lceil \frac{k}{n}\right\rceil n-3n+4$, a contradiction with $\delta\leq \left\lceil\frac{\lceil \frac{k}{n}\rceil n}{2}\right\rceil-\frac{3n}{2}$. Hence,  $$b+1< \left\lceil \frac{k}{n}\right\rceil.$$

By \eqref{equation generalN(T)geq} and Lemma \ref{Lemma CombinatoricEquation},
we have that
$$\begin{array}{llll}
N(T)
&\geq& \sum\limits_{t\geq \left\lceil \frac{k}{n}\right\rceil-(b+1)}{|T|-u-(n-y) \choose  t n} \\
&\geq& 2^{|T|-u-(n-y)-\left(\left\lceil \frac{k}{n}\right\rceil -(b+1)\right)n+1}-1. \\
\end{array}$$
Since $N(T)<2^{|T|-\left\lceil \frac{k}{n}\right\rceil n+1+\delta}-1$, it follows that $$|T|-u-(n-y)-\left(\left\lceil \frac{k}{n}\right\rceil -(b+1)\right)n+1<|T|-\left\lceil \frac{k}{n}\right\rceil n+1+\delta.$$ Combined with \eqref{equation allxigeq2} and \eqref{equation sumxi=bn+y}, we derive that $$u\leq \sum\limits_{i=1}^u (x_i-1)=bn+y-u\leq \delta-1,$$ completing the proof.
\end{proof}

Now we show that Theorem \ref{Theorem Number of idempotent-sum subsequences} implies Theorem B as a corollary.

\noindent {\bf Sketch of proof.} \ Let $T$ and $\delta$ be the sequence given as Theorem B. Since $\mathbb{Z}\diagup n \mathbb{Z}\cong C_{1;n}$, we can view $T$ to be the sequence over the semigroup $C_{1;n}$.
By Lemma \ref{lemma case of k leq n}, we have $N(T;\textbf{e})=N(T)<2^{|T|-n+1+\delta}=2^{|T|-\lceil \frac{k}{n}\rceil n+1+\delta}-1+\lfloor\frac{1}{k}\rfloor$. By Theorem \ref{Theorem Number of idempotent-sum subsequences} (ii), we have that $T$ contains a subsequence $T'$ of length $n-\delta$
such that
$T'$ is a signed $g$-smooth sequence for some generator $g$ of $\mathbb{Z}\diagup n \mathbb{Z}$. Let $$T'=a_1\cdot\ldots\cdot a_{n-\delta}.$$
By the definition of signed smooth sequences, there exist $\epsilon_1,\ldots,\epsilon_{n-\delta}\in \{1,-1\}$ such that $$L=\mathop{\bullet}\limits_{i\in [1, \ n-\delta]} (\epsilon_i \ a_i)$$ is $g$-smooth, and by Lemma \ref{SachenChen} (ii),
we have ${\rm v}_g(L)\geq  2|L|-n+1=n-2\delta+1$. Suppose $\epsilon_i a_i=g$ for each $i\in [1, n-2\delta+1]$, and
$$\epsilon_j a_j=z_j g \mbox{ where } 1\leq z_j\leq n-1 \mbox{ for each } j\in [n-2\delta+2, n-\delta].$$
 Since $(n-2\delta+1)+\sum\limits_{j=n-2\delta+2}^{n-\delta} z_j\leq n-1$, it follows that $\sum\limits_{j=n-2\delta+2}^{n-\delta} z_j\leq n-1-(n-2\delta+1)=2\delta-2\leq \frac{n}{2}$. Then we conclude that $T$ has the form given as Theorem B. \qed

 The condition $\delta\leq \left\lceil\frac{\lceil \frac{k}{n}\rceil n}{2}\right\rceil-\frac{3n}{2}$ in Proposition \ref{prop 1} is given just for the technical reasons, which is not necessarily optimal to ensure the conclusion holds.
 Hence, we propose the following question:

\begin{ques} What is the maximal value of $\delta$ such that the conclusion in Proposition \ref{prop 1} holds?
\end{ques}

\section{The connection of enumeration of idempotent-sum subsequences and smooth subsequences}

It is easy to observe that a sequence $T$ is idempotent-sum free over cyclic semigroups ${\rm C}_{k;  n}$ (zero-sum free over cyclic groups) if and only if $N(T;\textbf{e})=\lfloor\frac{1}{k}\rfloor$. Hence, the structural characterization of sequences such that $N(T;\textbf{e})$ is not large with respect to a parameter $\delta$ may generalize the result on the structure of long idempotent-sum free sequences. In particular, the first author \cite{Wangstructureincyclic} obtained the following Theorem C on long idempotent-sum free sequences over cyclic semigroups. Below we show that Theorem \ref{Theorem Number of idempotent-sum subsequences} implies Conclusion (i) of Theorem C as a corollary.

\noindent  \textbf{Theorem C.} \cite{Wangstructureincyclic} \ {\sl For integers $k,n\geq 1$, let $T\in \mathcal{F}({\rm C}_{k;  n})$ be a sequence of length
\begin{equation}\label{equation length for T}
|T|\geq \left\{ \begin{array}{ll}
\left\lfloor\frac{(\left\lceil\frac{k}{n}\right\rceil+1) n}{2}\right\rfloor, & \textrm{if $k>n$;}\\
\\
\left\lfloor\frac{n}{2}\right\rfloor+1, & \textrm{otherwise.}\\
\end{array} \right.
\end{equation}
Then $T$ is idempotent-sum free if and only if one of the following two conditions holds:

(i) $\mathop{\bullet}\limits_{a\mid T} {\rm ind}(a)\in \mathcal{F}(\mathbb{Z})$ is $1$-smooth with $\sum\limits_{a\mid T} {\rm ind}(a)\leq \left\lceil\frac{k}{n}\right\rceil n-1$ in the case of $k>n$;

(ii) $\mathop{\bullet}\limits_{a\mid T} ({\rm ind}(a)+n\mathbb{Z}) \in \mathcal{F}(\mathbb{Z}\diagup n \mathbb{Z})$ is $g$-smooth for some $g\in  \mathbb{Z}\diagup n \mathbb{Z}$ with ${\rm ord}(g)=n$ in the case of $k\leq n$.}

\noindent {\bf Sketch of proof.} \  Say $k>n$. Obviously, it suffices to show the necessity, since the converse follows easily from Lemma \ref{Lemma product condition containing idmepotent}. Let $T$ be an idempotent-sum free sequence over ${\rm C}_{k;  n}$ of length $|T|\geq \left\lfloor\frac{(\left\lceil\frac{k}{n}\right\rceil+1) n}{2}\right\rfloor$.
Let $\delta=\left\lceil\frac{k}{n}\right\rceil n-|T|$. Since $T$ is idempotent-sum free, it follows that $|T|<{\rm I}({\rm C}_{k;  n})=\left\lceil\frac{k}{n}\right\rceil n$, and so
$\delta>0$. Moreover, we observe that $N(T;\textbf{e})=0<2^1-1+\lfloor\frac{1}{k}\rfloor=2^{|T|-\lceil \frac{k}{n}\rceil n+1+\delta}-1+\lfloor\frac{1}{k}\rfloor$. Since $n+\delta-1=n+(\left\lceil\frac{k}{n}\right\rceil n-|T|)-1\leq n+(\left\lceil\frac{k}{n}\right\rceil n-\left\lfloor\frac{(\left\lceil\frac{k}{n}\right\rceil+1) n}{2}\right\rfloor)-1=\left\lceil\frac{(\left\lceil\frac{k}{n}\right\rceil+1) n}{2}\right\rceil-1\leq \left\lfloor\frac{(\left\lceil\frac{k}{n}\right\rceil+1) n}{2}\right\rfloor\leq |T|$, it follows from Theorem \ref{Theorem Number of idempotent-sum subsequences} (i) that $\mathop{\bullet}\limits_{a\mid T} {\rm ind}(a)\in \mathcal{F}(\mathbb{Z})$ is $1$-smooth and thus, $\sum\limits_{a\mid T} {\rm ind}(a)\leq \left\lceil\frac{k}{n}\right\rceil n-1$ by Lemma \ref{Lemma product condition containing idmepotent}. \qed

If has been noticed in Section 3 that the enumeration of idempotent-sum subsequences over the cyclic semigroup ${\rm C}_{k;  n}$ has a close connection with the largest length of zero-sum free sequences which are not {\sl signed smooth sequences} over the cyclic group $\mathbb{Z}\diagup n \mathbb{Z}$. To learn more on this connection in detail, we need to propose the following definition.

\begin{definition} Let $G$ be a finite cyclic group. We define ${\rm Sgn}(G)$ to be the smallest positive integer $\ell$ such that every zero-sum free sequence $T$ over $G$ of length greater than or equal to $\ell$ is a singed $g$-smooth sequence for some generator $g$ of $G$.
\end{definition}

Then we have the following proposition.

\begin{prop} \label{prop sig} Let $n\geq 2$ be a positive integer, and let $G$ be a cyclic group of order $n$. \begin{displaymath}
{\rm Sgn}(G)=\left\{ \begin{array}{ll}
1 & \textrm{if $n\in \{2,3,5,7\}$;}\\
2 & \textrm{if $n=4$;}\\
\lceil \frac{n+1}{2}\rceil & \textrm{else.}\\
\end{array} \right.
\end{displaymath}
\end{prop}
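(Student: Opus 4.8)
The plan is to pin down ${\rm Sgn}(G)$ by combining an upper bound coming from the smoothness results already available (Lemma \ref{SachenChen}) with lower-bound examples of zero-sum free sequences that fail to be signed $g$-smooth for any generator $g$. First I would dispose of the upper bound: for $n\geq 3$, Lemma \ref{SachenChen}(i) says every zero-sum free sequence of length at least $\lfloor\frac n2\rfloor+1$ is $g$-smooth, hence \emph{a fortiori} signed $g$-smooth, for some generator $g$. Since $\lfloor\frac n2\rfloor+1=\lceil\frac{n+1}{2}\rceil$ when $n$ is odd and equals $\frac n2+1>\lceil\frac{n+1}{2}\rceil$ when $n$ is even, this already gives ${\rm Sgn}(G)\leq\lceil\frac{n+1}{2}\rceil$ for odd $n$; for even $n$ I would need the slightly sharper observation that a zero-sum free sequence of length exactly $\frac n2$ is also signed $g$-smooth (e.g.\ by a short direct argument: such a sequence, if not already $g$-smooth, can be adjusted by signs — replacing some $a$ by $-a$ — to become $g$-smooth, using that its terms have small indices), which brings the even case down to $\lceil\frac{n+1}{2}\rceil$ as well. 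The small cases $n\in\{2,3,4,5,7\}$ I would check by hand: for $n\in\{2,3,5,7\}$ one verifies directly that \emph{every} nonempty zero-sum free sequence over $\mathbb Z/n\mathbb Z$ is signed $g$-smooth (there are only finitely many up to automorphism), forcing ${\rm Sgn}(G)=1$, and for $n=4$ the sequence $2\cdot 2$ (i.e.\ $g\cdot g$ with ${\rm ord}(g)=2$... more precisely the element of order $2$, twice) is zero-sum free but not signed smooth while every zero-sum free sequence of length $\geq 2$ that is not of this exceptional shape... — actually the cleaner statement is that the single exceptional length-$1$... I would present it as: ${\rm Sgn}=1$ fails for $n=4$ because $(2)^{[1]}$... let me instead exhibit the length-$1$ obstruction, the element $2+4\mathbb Z$, which is a zero-sum free sequence of length $1$ that is not signed $g$-smooth for any generator $g$ (a signed $g$-smooth sequence of length $1$ must be $\pm g$), giving ${\rm Sgn}(\mathbb Z/4\mathbb Z)\geq 2$, and the match with the upper bound.

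Second, for the lower bound when $n\notin\{2,3,4,5,7\}$ I would exhibit, for each such $n$, a zero-sum free sequence of length $\lceil\frac{n+1}{2}\rceil-1=\lceil\frac{n-1}{2}\rceil$ over $\mathbb Z/n\mathbb Z$ that is not signed $g$-smooth for any generator $g$. The natural candidate is built around the element of order $2$ when $n$ is even, or a "middle" element when $n$ is odd: for instance a sequence whose nonzero reduced terms all equal something like $2g$ together with a few copies designed so that the total sum stays away from $0$ and no choice of signs produces a $g$-smooth rearrangement. Concretely I expect that taking $g$ a generator and forming $(2g)^{[\lceil\frac{n-1}{2}\rceil]}$ (or a mild variant thereof, e.g.\ replacing one term to adjust parity) works: its sum is $n g=0$ only when the length hits $\lceil\frac n2\rceil$, so at length $\lceil\frac{n-1}{2}\rceil$ it is zero-sum free; and signing the terms $\pm 2g$ can never yield a sequence containing $\pm g$, so by Observation A it cannot be $h$-smooth for any generator $h$ — here one must rule out \emph{all} generators $h$, not just the obvious one, which is where a counting/divisibility argument enters (a signed smooth sequence in basis $h$ has $h$ as a term, so all terms lie in $\langle h\rangle$ with small $h$-indices, incompatible with every term being $\pm 2g$ for $n$ large). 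This is the step I expect to be the main obstacle: verifying that the proposed extremal sequence resists \emph{every} generator and every sign pattern, and getting the arithmetic to work uniformly for all $n\geq 6$ outside the listed exceptions (the exclusions $n\in\{5,7\}$ presumably arise precisely because for those small odd $n$ the candidate sequence accidentally \emph{does} admit a signing, so the bound ${\rm Sgn}=1$ survives there).

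Finally I would assemble the three regimes: ${\rm Sgn}(G)\le\lceil\frac{n+1}{2}\rceil$ always (for $n\geq 6$), ${\rm Sgn}(G)\geq\lceil\frac{n+1}{2}\rceil$ from the extremal example, hence equality for $n\notin\{2,3,4,5,7\}$; ${\rm Sgn}(G)=2$ for $n=4$ from the length-$1$ obstruction $2+4\mathbb Z$ matched against the length-$2$ upper bound; and ${\rm Sgn}(G)=1$ for $n\in\{2,3,5,7\}$ by the finite check that every zero-sum free sequence over such $\mathbb Z/n\mathbb Z$ is signed $g$-smooth. Throughout, the tools are exactly Observation A (to certify or refute $g$-smoothness from the index multiset), Definition \ref{Definition behaving sequence} (to pass between smooth and signed smooth), and Lemma \ref{SachenChen} (for the uniform upper bound); no new machinery is needed, only careful bookkeeping of the small cases and of the "all generators" quantifier in the lower-bound example.
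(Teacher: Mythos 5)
Your overall architecture matches the paper's: upper bound via Lemma \ref{SachenChen}(i), explicit extremal zero-sum free non-signed-smooth sequences for the lower bound, and finite verification for $n\in\{2,3,4,5,7\}$. However, there are two concrete problems. First, an arithmetic slip: $\lfloor\frac n2\rfloor+1=\lceil\frac{n+1}{2}\rceil$ for \emph{all} $n$, even as well as odd (for $n$ even, $\lceil\frac{n+1}{2}\rceil=\frac n2+1$). So Lemma \ref{SachenChen}(i) already gives the upper bound in every case, and the ``slightly sharper observation'' you propose for even $n$ --- that every zero-sum free sequence of length exactly $\frac n2$ is signed $g$-smooth --- is not only unnecessary but \emph{false}: the paper's extremal sequence $(2x)^{[\frac n2-1]}\cdot(3x)$ has length exactly $\frac n2$, is zero-sum free, and is not signed $g$-smooth for any generator $g$. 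Had that step been needed, your argument would have collapsed.

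Second, and more seriously, your lower-bound candidate $(2g)^{[\lceil\frac{n-1}{2}\rceil]}$ does not work. For odd $n$ the element $2g$ is itself a generator $h$, so the sequence is literally $h^{[(n-1)/2]}$, which is $h$-smooth (all indices equal $1$, total sum $\frac{n-1}{2}<n$); your parenthetical claim that all terms being $\pm 2g$ is ``incompatible'' with smoothness in some basis $h$ is exactly wrong in this case. For even $n$ the candidate has length $\frac n2$ and sums to $0$, so it is not even zero-sum free without modification. The genuine content of the lower bound is choosing a sequence with at least two distinct residues and verifying failure of signed smoothness against \emph{every} generator: the paper uses $(2x)^{[\frac{n-5}{2}]}\cdot(3x)^{[2]}$ for odd $n\geq 11$, $(2x)^{[\frac n2-1]}\cdot(3x)$ for even $n\geq 6$, and the ad hoc sequence $x\cdot(3x)^{[2]}\cdot(7x)$ for $n=9$. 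The case $n=9$ must be treated separately because the natural candidate $(2x)^{[2]}\cdot(3x)^{[2]}$ there \emph{is} signed $(2x)$-smooth (writing $3x=6\cdot(2x)$, the signed version $-3x=3\cdot(2x)$ gives index multiset $\{1,1,3,3\}$ with sum $8<9$, satisfying Observation A); your proposal does not notice this exception. Your $n=4$ lower bound via the term $2+4\mathbb{Z}$ and the finite checks for $n\in\{2,3,5,7\}$ are fine.
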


\begin{proof}  If $n\in \{2,3,4,5,7\}$ then the conclusion follows by straightforward verifications. Hence, we assume $n\notin \{2, 3,4,5,7\}$. Since $a$ g-smooth sequence is also a signed $g$-smooth sequence, it follows from Lemma \ref{SachenChen} (i) that
$${\rm Sgn}(G)\leq \left\lfloor\frac{n}{2}\right\rfloor+1=\left\lceil \frac{n+1}{2}\right\rceil$$ for any integer $n\geq 3$. To prove ${\rm Sgn}(G)=\lceil \frac{n+1}{2}\rceil$, it suffices to construct a zero-sum free sequence $L$ of length $\lceil \frac{n+1}{2}\rceil-1$ over $G$ such that $L$ is not a signed $g$-smooth sequence for every generator $g$ of $G$.

Now we fix a generator $x$ of $G$. We can check that the following sequence $L$ is the desired one, where \begin{displaymath}
L=\left\{ \begin{array}{ll}
(2x)^{[\frac{n-5}{2}]}\cdot(3 x)^{[2]} & \textrm{if $n\equiv 1 \pmod 2$  and  $n\geq 11$;}\\
(2x)^{[\frac{n}{2}-1]}\cdot(3 x) & \textrm{if $n\equiv 0 \pmod 2$  and  $n\geq 6$};\\
x\cdot(3x)^{[2]}\cdot (7x) & \textrm{if $n=9$}.\\
\end{array} \right.
\end{displaymath} \end{proof}

In fact,  an additive invariant, denoted $\widehat{{\rm Smo}}(\cdot)$, similar to ${\rm Sgn}(G)$ has been investigated for cyclic semigroups, which also originates from the research done by Chapman, Freeze and Smith \cite{Chapman,Chapman2}, Gao \cite{Gaointeger}, Savchev and Chen \cite{Sav}, and Yuan
\cite{Yuan} for minimal zero-sum (zero-sum free sequences) sequences over finite cyclic groups.  For convenience, we introduce only its form in cyclic groups as follows.

\begin{definition}  \cite{Wangstructureincyclic} Let $G$ be a finite cyclic group. We define $\widehat{{\rm Smo}}(G)$ to be the smallest positive integer $\ell$ such that every zero-sum free sequence $T$ over $G$ of length greater than or equal to $\ell$ is a $g$-smooth sequence for some generator $g$ of $G$.
\end{definition}

By Theorem 4.4 in \cite{Wangstructureincyclic}, we have the following.

\begin{prop} Let $n\geq 2$ be a positive integer, and let $G$ be a cyclic group of order $n$. \begin{displaymath}
\widehat{{\rm Smo}}(G)=\left\{ \begin{array}{ll}
1 & \textrm{if $n\in \{2,3,5\}$;}\\
2 & \textrm{if $n=4$;}\\
3 & \textrm{if $n=7$;}\\
\lceil \frac{n+1}{2}\rceil & \textrm{else.}\\
\end{array} \right.
\end{displaymath}
\end{prop}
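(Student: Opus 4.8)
The plan is to deduce the final proposition directly from Theorem~4.4 of~\cite{Wangstructureincyclic}, whose statement asserts the structure of zero-sum free sequences over the cyclic semigroup ${\rm C}_{k;n}$; specializing to the case $k=1$ (so that ${\rm C}_{1;n}\cong \mathbb{Z}/n\mathbb{Z}$ and ${\rm ind}$ becomes the residue map) translates that semigroup result into a statement about zero-sum free sequences over the cyclic group $G$ of order $n$. The first step is therefore to write out what Theorem~4.4 yields in the group case: every zero-sum free sequence over $G$ whose length is at least some threshold is $g$-smooth for a generator $g$, while below that threshold one can exhibit non-$g$-smooth zero-sum free sequences; the threshold is exactly the value of $\widehat{{\rm Smo}}(G)$ being claimed. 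So the proof amounts to a citation plus bookkeeping: match the branches of the piecewise formula in Theorem~4.4 (which a priori is parametrized by $k$ and $n$) against the branches in the proposition.

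Concretely, I would proceed as follows. First, invoke Lemma~\ref{SachenChen}(i): any zero-sum free $T\in \mathcal{F}(G)$ with $|T|\geq \lfloor n/2\rfloor+1 = \lceil (n+1)/2\rceil$ is $g$-smooth for some generator $g$; this already gives the upper bound $\widehat{{\rm Smo}}(G)\leq \lceil (n+1)/2\rceil$ for all $n\geq 3$, covering the generic ``else'' branch in one direction. Second, for each small exceptional value $n\in\{2,3,4,5,7\}$, verify the claimed value by a finite check: enumerate (up to automorphism of $G$) the zero-sum free sequences of the relevant short lengths and confirm which are $g$-smooth. For $n\in\{2,3,5\}$ one checks that every zero-sum free sequence of length $\geq 1$ is already $g$-smooth (essentially because a single term that generates $G$ is $g$-smooth and short zero-sum free sequences over these tiny groups are forced into that shape); for $n=4$ one checks length $1$ fails (the sequence $(2x)$ is zero-sum free but not $g$-smooth for any generator) while length $\geq 2$ succeeds; for $n=7$ one checks lengths $1,2$ can fail (e.g.\ powers of $2x$ or $3x$) while length $\geq 3$ succeeds. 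Third, for the generic case $n\notin\{2,3,4,5,7\}$ I must produce, for each such $n$, an explicit zero-sum free sequence of length $\lceil (n+1)/2\rceil - 1$ that is \emph{not} $g$-smooth for any generator $g$, which forces $\widehat{{\rm Smo}}(G)\geq \lceil (n+1)/2\rceil$ and hence equality. The natural candidates mirror those in the proof of Proposition~\ref{prop sig}: sequences built almost entirely from the term $2x$ (for a fixed generator $x$) padded with one or two copies of $3x$, with a separate ad hoc construction for $n=9$; one then checks each is zero-sum free by computing $\Sigma(\cdot)$ and checks it is not $g$-smooth for any generator by observing (via Observation~A applied in each $g$-basis) that the required value $1$ never appears enough times, or more simply that ${\rm v}_g$ of any $g$-basis is too small against the bound $2|L|-n+1$ from Lemma~\ref{SachenChen}(ii) that a $g$-smooth sequence of this length would have to satisfy.

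The main obstacle I anticipate is the lower-bound constructions in the generic case: one has to confirm simultaneously that the proposed sequence $L$ (of length roughly $n/2$) is zero-sum free and that it fails $g$-smoothness for \emph{every} generator $g$, not just the distinguished one, and the genuine difference between ``$g$-smooth'' here and ``signed $g$-smooth'' in Proposition~\ref{prop sig} means the constructions and the verifications are slightly more delicate than in the signed case — in particular the clean trick ``apply Lemma~\ref{SachenChen}(ii): a $g$-smooth sequence of length $\ell$ needs ${\rm v}_g\geq 2\ell-n+1$'' is the lever I would lean on to rule out smoothness uniformly in $g$, since for a sequence supported on $\{2x,3x\}$ the only candidate generators $g$ with positive multiplicity are $2x$ and $3x$ themselves (when those generate $G$), and for each the multiplicity bound $2\ell-n+1$ fails by a direct count. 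Once that lemma is deployed the verifications collapse to arithmetic in $\mathbb{Z}/n\mathbb{Z}$, and the piecewise formula assembles from the upper bound of Lemma~\ref{SachenChen}(i), the finitely many small-$n$ checks, and these constructions.
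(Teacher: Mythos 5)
Your primary route---quote Theorem 4.4 of \cite{Wangstructureincyclic} and specialize to $k=1$, where ${\rm C}_{1;n}\cong\mathbb{Z}/n\mathbb{Z}$---is exactly what the paper does: its entire ``proof'' of this proposition is the one-line citation, with no independent argument supplied. So on the main point you and the paper coincide.

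The self-contained sketch you add goes beyond the paper and is sound in outline (it mirrors the paper's proof of Proposition \ref{prop sig}, and your observation that the same witnesses work because ``not signed $g$-smooth'' implies ``not $g$-smooth'' is correct), but two details deserve care if you ever write it out. First, for $n=7$ your proposed length-$2$ counterexamples ``powers of $2x$ or $3x$'' do not work: $(2x)^{[2]}$ \emph{is} $g$-smooth with $g=2x$ (it is just $g\cdot g$), and likewise for $(3x)^{[2]}$; a correct witness is the mixed sequence $(2x)\cdot(3x)$, which is zero-sum free and fails $g$-smoothness for every generator $g$ of $\mathbb{Z}/7\mathbb{Z}$. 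Second, the ``clean lever'' ${\rm v}_g(L)\geq 2|L|-n+1$ is vacuous in the generic odd case: for $|L|=\lfloor n/2\rfloor$ with $n$ odd the bound reads ${\rm v}_g(L)\geq 0$, so it rules out nothing. What actually does the work there is the weaker but sufficient remark that $g$-smoothness forces $g$ itself to occur in $L$ (condition (i) of Observation A), restricting the candidate generators to $2x$ and $3x$, after which conditions (ii) and (iii) of Observation A fail by direct computation (e.g.\ for $g=2x$ the index sum of $(2x)^{[(n-5)/2]}\cdot(3x)^{[2]}$ is $(3n+1)/2>n$). With those two repairs your sketch is a complete independent proof; as submitted, the citation route alone already matches the paper.
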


For more results and open questions on $\widehat{{\rm Smo}}({\rm C}_{k;  n})$ of cyclic semigroups ${\rm C}_{k;  n}$, one is referred to Section 3 in \cite{Wangstructureincyclic}. Below we shall give an example by Proposition \ref{prop sig} to show the restriction $\delta\leq \lceil\frac{n}{2}\rceil-1$ in Theorem \ref{Theorem Number of idempotent-sum subsequences} can not be improved just as stated in Remark \ref{Remark}.

\begin{exam}\label{Example 1}
Suppose $n\equiv 0\pmod 2$ with $n\geq 6$, or $n\equiv 1\pmod 2$ with $n\geq 9$.
Set $\delta=\lceil\frac{n}{2}\rceil$. By Proposition  \ref{prop sig}, we can take a sequence $L$ of length ${\rm Sgn}(\mathbb{Z}\diagup n \mathbb{Z})-1=\lceil \frac{n+1}{2}\rceil-1=\lfloor \frac{n}{2}\rfloor$ over ${\rm C}_{k;  n}$ such that $\Psi(L)$ is a zero-sum free sequence over $\mathbb{Z}\diagup n \mathbb{Z}$, and that $\Psi(L)$ is not a signed $g$-smooth for every generator $g$ of $\mathbb{Z}\diagup n \mathbb{Z}$.
Then we set $T=L\cdot (na)^{[|T|-\lfloor \frac{n}{2}\rfloor]}$, i.e., $\Psi(T)=\Psi(L)\cdot (0+n\mathbb{Z})^{[|T|-\lfloor \frac{n}{2}\rfloor]}$.  Since the sequence $\Psi(L)$ is zero-sum free over $\mathbb{Z}\diagup n \mathbb{Z}$, it follows from Lemma \ref{lemma case of k leq n} that
$N(T; \textbf{e})=N(\Psi(T))-1+\lfloor\frac{1}{k}\rfloor
=2^{|T|-\lfloor \frac{n}{2}\rfloor}-1+\lfloor\frac{1}{k}\rfloor
=2^{|T|-(n-\lceil \frac{n}{2}\rceil)}-1+\lfloor\frac{1}{k}\rfloor
=2^{|T|-\lceil \frac{k}{n}\rceil n+\delta}-1+\lfloor\frac{1}{k}\rfloor
<2^{|T|-\lceil \frac{k}{n}\rceil n+1+\delta}-1+\lfloor\frac{1}{k}\rfloor$. It follows that  $\Psi(T)$ contains no signed $g$-smooth subsequence of length $n-\delta=\lfloor\frac{n}{2}\rfloor$, where $g$ runs over every generator of $\mathbb{Z}\diagup n \mathbb{Z}$.
\end{exam}

As stated in Remark \ref{remark one},  the enumeration of subsequences with a given additive property has a close connection with the corresponding zero-sum invariant, precisely,
the number of subsequences with a given property (idempotent-sum, zero-sum, etc.) is bounded by a power of $2$ with the exponent to be a function of the corresponding zero-sum invariant which ensures the existence of subsequences with the prescribed properties (idempotent-sum or zero-sum).  Hence, we shall close this paper with the following general problem:

\begin{problem} Let $\mathcal{S}$ be a finite commutative semigroup (or group). Determine all subsets $X$ of $\mathcal{S}$ which possess the following two properties simultaneously:

(i) There exists a least positive integer, denoted $Z(X)$, such that every sequence over $\mathcal{S}$ of length $Z(X)$ must contain a nonempty subsequence with sum belonging to $X$?

(ii) There exists a linear function $f(\cdot)$ and a constant $\lambda$, such that for every integer $\ell \geq Z(X)$, the value of $\min\left\{N(T; X): T \mbox{ is taken over all sequences of terms from $\mathcal{S}$ with } |T|=\ell\right\}$ is equal to $2^{f\left(\ell-Z(X)\right)}+\lambda$?
 \end{problem}

 \begin{remark} By \cite{Olson2} Theorem 2 (which can be also found in \cite{ChangQuWang}), we can show that if $\mathcal{S}$ is a finite commutative group and $X$ a subgroup of $\mathcal{S}$, then $X$ possesses the above two properties, in particular, $Z(X)={\rm D}(G\diagup X)$ and $$\min\left\{N(T; X): T \mbox{ is taken over all sequences of terms from $\mathcal{S}$ with } |T|=\ell\right\}=2^{\ell-Z(X)+1}$$ for any integer $\ell \geq Z(X)$.

 \end{remark}

\bigskip

\noindent {\bf Acknowledgements}

\noindent  This work is supported by NSFC (grant no. 12371335,  12271520).


\begin{thebibliography}{99}



\bibitem{CaoSun} H. Cao and Z.-W. Sun, \emph{On the number of zero-sum subsequences,} Discrete Math.,
\textbf{307} (2007) 1687--1691.

\bibitem{ChangQuWang} J. Chang, S. Chen, Y. Qu, G. Wang and H. Zhang,  \emph{On the number of subsequences with a given sum
in a finite abelian group,} The electronic journal of combinatorics, \textbf{18} (2011) $\sharp$P133.

\bibitem{Chapman} S. Chapman, M. Freeze and W. Smith, \emph{Minimal zero sequences and the strong
Davenport constant,} Discrete Math., \textbf{203} (1999) 271--277.

\bibitem{Chapman2} S. Chapman and W. Smith, \emph{A characterization of minimal zero-sequences of
index one in finite cyclic groups,} Integers \textbf{5} (2005) A27.

\bibitem{uredi} Z. F$\ddot{u}$redi and D.J. Kleitman, \emph{The minimal number of zero sums,} Combinatorics, Paul Erd\H{o}s is Eighty, vol. 1, J.
Bolyai Math. Soc., 1993, pp. 159--172.

\bibitem{Gaonumbertheory}  W. Gao, \emph{Two addition theorems on groups of prime order,} J. Number Theory, \textbf{56} (1996)  211--213.

\bibitem{GaoDisenumber} W. Gao, \emph{On the number of zero sum subsequences,} Discrete Math., \textbf{163} (1997) 267--273.

\bibitem{Gaogivenenumber}     W. Gao, \emph{On the number of subsequences with given sum,} Discrete Math., \textbf{195} (1999) 127--138.

\bibitem{Gaointeger} W. Gao, \emph{Zero sums in finite cyclic groups,} Integers, \textbf{0} (2000) A12.


\bibitem{GaoGeroldingernumber1}  W. Gao and  A. Geroldinger,  \emph{On the number of subsequences with given sum of
sequences over finite abelian $p$-groups,} Rocky Mountain J. Math., \textbf{37} (2007) 1541--1550.

\bibitem{GGXia} W. Gao, D. Grynkiewicz and X. Xia, \emph{On n-sums in an abelian group,} Combin.
 Probab. Comput., \textbf{25} (2016) 419--435.

\bibitem{GPeng} W. Gao and J. Peng, \emph{On the number of zero-sum subsequences of restricted size,}
Integers, \textbf{9} (2009) 537--554.


\bibitem{GRuzsa} A. Geroldinger, \emph{Additive Group Theory and Non-unique Factorizations,} 1--86 in: A. Geroldinger and I. Ruzsa (Eds.), Combinatorial Number
Theory and Additive Group Theory (Advanced Courses in Mathematics-CRM Barcelona), Birkh\"{a}user, Basel, 2009.

\bibitem{GH} A. Geroldinger and F. Halter-Koch, \emph{Non-Unique
Factorizations. Algebraic, Combinatorial and Analytic Theory,}
Pure Appl. Math., vol. 278, Chapman $\&$ Hall/CRC,
2006.



\bibitem{Grilletmonograph} P.A. Grillet, \emph{Commutative Semigroups,} Kluwer Academic Publishers, 2001.

\bibitem{Grynkiewiczmono} D.J. Grynkiewicz, \emph{Structural Additive Theory}, Developments in Mathematics, vol. 30,
Springer, Cham, 2013.

\bibitem{Grynkiewnumber} D.J. Grynkiewicz, \emph{On the number of m-term zero-sum subsequences,} Acta Arith., \textbf{121}
(2006) 275--298.

\bibitem{Kisin} M. Kisin, \emph{The number of zero sums modulo $m$ in a sequence of length $n$,} Mathematika, \textbf{41} (1994) 149--163.

\bibitem{Olson2} J.E. Olson,  \emph{A combinatorial problem on finite abelian groups II}, J. Number Theory, \textbf{1} (1969) 195--199.

\bibitem{OlsonEnumberate} J.E. Olson,   \emph{A problem of Erd\H{o}s on abelian groups,}  Combinatorica,  \textbf{7} (1987) 285--289.

\bibitem{Sav} S. Savchev and F. Chen, \emph{Long zero-free sequences in finite cyclic groups,} Discrete
Math., \textbf{307} (2007) 2671--2679.


\bibitem{Wangstructureincyclic} G. Wang,   \emph{Structure of long idempotent-sum free sequences over finite cyclic semigroups,} International Journal of Number Theory, \textbf{17} (2021) 871--886.

\bibitem{Wangcommu} G. Wang,   \emph{Erd\H{o}s-Burgess constant of commutative semigroups,}  Comm. Algebra, \textbf{50} (2022) 3794--3802.

\bibitem{Yuan} P. Yuan, \emph{On the index of minimal zero-sum sequences over finite cyclic groups,}
    J. Combin. Theory Ser. A, \textbf{114} (2007)  1545--1551.

    \bibitem{ZengYuan} X. Zeng and P. Yuan, \emph{Indexes of long zero-sum sequences over cyclic groups,}
    European J. Combin., \textbf{32} (2011)  1213--1221.




































\end{thebibliography}
\end{document}